\documentclass[a4paper,12pt]{amsart}
\usepackage[T1]{fontenc}
\usepackage[utf8]{inputenc}
\usepackage[english]{babel}
\usepackage{etoolbox}
\usepackage{amsmath, amssymb, amsthm, amsfonts}
\usepackage{mathrsfs}
\usepackage{tikz}
\usetikzlibrary{arrows}
\usetikzlibrary{matrix}
\usepackage[left=2.2cm, right=2.2cm, top=2cm]{geometry}
\usepackage{enumerate}
\usepackage{hyperref}
\date{}
\setlength{\parindent}{0pt}
\usepackage[font=small,labelfont=bf]{caption}
\theoremstyle{plain}
\newtheorem{thm}{Theorem} 
\newtheorem{lem}{Lemma} 
\newtheorem{cor}{Corollary} 

\newtheorem{prop}{Proposition} 

\nonstopmode\numberwithin{equation}{section}
\theoremstyle{definition}
\newtheorem{defi}{Definition} 
\newtheorem{rema}{Remark}

\newtheorem*{ques*}{Question}

\makeatletter
\def\tagform@#1{\maketag@@@{\ignorespaces#1\unskip\@@italiccorr}}
\let\orgtheequation\theequation
\def\theequation{(\orgtheequation)}
\makeatother
\let\orgautoref\autoref

\renewcommand{\autoref}[1]{\def\equationautorefname{}\orgautoref{#1}}

\setlength{\parindent}{0pt}

\usepackage{fancyhdr}

\newcommand\shorttitle{Stochastic Bouligand Landweber}

\fancyhf{}

\fancyhead[C]{%
\ifodd\value{page}
  \small\scshape\authors
\else
  \small\scshape\shorttitle
\fi
}
\fancyhead[R]{\thepage}
\pagestyle{fancy}

\begin{document}

\title[Stochastic Bouligand Landweber iteration]{Stochastic Data-Driven Bouligand Landweber Method for Solving Non-smooth Inverse Problems}
\author{ Harshit Bajpai$^{\dagger}$, Gaurav Mittal$^{\ddagger}$, Ankik Kumar Giri$^{\dagger}$}
\email{bajpaiharshit87@gmail.com, gaurav.mittaltwins@yahoo.com, ankik.giri@ma.iitr.ac.in}
\address{$^\dagger$Department of Mathematics, Indian Institute of Technology Roorkee, Roorkee, Uttarakhand, 247667, India}
\address{$^\ddagger$Defence Research and Development Organization, Near Metcalfe House, Delhi,  110054, India}
\maketitle
\begin{abstract} 
   In this study, we present and analyze a novel variant of the stochastic gradient descent method, referred as Stochastic data-driven Bouligand Landweber iteration tailored for addressing the system of non-smooth ill-posed inverse problems. Our method incorporates the utilization of training data, using  a  bounded linear operator, which guides the iterative procedure. At each iteration step, the method randomly chooses one equation from the nonlinear system with data-driven term. When dealing with the precise or exact data, it has been established that mean square iteration error converges to zero. However, when confronted with the noisy data, we employ our approach in conjunction with a predefined stopping criterion, which we refer to as an \textit{a-priori} stopping rule.
We provide a comprehensive theoretical foundation, establishing convergence and stability for this scheme within the realm of infinite-dimensional Hilbert spaces. These theoretical underpinnings are further bolstered by discussing an example that fulfills assumptions of the paper. 
 \end{abstract}

\vspace{.3cm}

\noindent

{ \bf Keywords:} Stochastic gradient descent; Data-driven regularization; Bouligand Landweber method; Inverse problems; Nonlinear ill-posed problems;  Black-box strategy.\\
\hspace{-56mm}\subjclass {}{\textbf{AMS Subject Classifications}: 47H17, 65J15, 65J20}\\
 \section{Introduction}\noindent
 This work is about deducing the approximate solution of the system of nonlinear ill-posed equations of the form
\begin{equation}\label{ststart}
    F_{i}(u) = y^{\dagger}_i, \hspace{2mm} i = 0, 1,\ldots, P-1,
\end{equation}
where,  $F_i : \mathbb{D}(F_i) \subset U \rightarrow Y$, for each $i$,  represents a nonlinear operator that may not be G\^ateaux differentiable  between the Hilbert spaces $U$ and $Y$. Here $P$ is a positive integer,  $y_{i}^{\dagger} \in Y$ represents the precise or exact data, $\mathbb{D}(F_i)$ denotes the domain of the operator $F_i$ and the Hilbert spaces $U$ and $Y$ are equipped with  usual inner products $(\cdot,\cdot)$ and norms $\|\cdot\|$, respectively.   Alternatively, for the product space $Y^P=Y\times Y\times\cdots Y$ ($P$ copies),  \ref{ststart} can be reformulated as 
\begin{equation}\label{stcombined}
  F(u) = y^{\dagger} , 
\end{equation}
where $F : U \rightarrow Y^P$ is defined as 
$$ F(u) = \begin{pmatrix} 
           F_0(u) \\
           \vdots \\
           F_{P-1}(u)
          \end{pmatrix}\ \  \text{and}\ \hspace{2mm} 
         y^{\dagger}= \begin{pmatrix} 
           y^{\dagger}_0 \\
           \vdots \\
           y^{\dagger}_{P-1}
          \end{pmatrix}. $$
As usual, instead of the exact data, we assume the availability of noisy data $y^{\delta}$ such that 
$$\|y^{\dagger} - y^{\delta} \| \leq \delta,$$
for noise level $\delta \geq 0$. Here, we write $\delta = (\delta_0, \delta_1,... ,\delta_{P-1})$ and $y^{\delta}= (y^{\delta}_0, y^{\delta}_1,...,
y^{\delta}_{P-1})$ and these fulfills
\begin{equation}
    \|y_{i}^{\delta} - y_i^{\dagger}\| \leq \delta_i \hspace{5mm} i = 0, 1,...,P-1.
\end{equation}
 Due to ill-posedness of $\ref{ststart}$, its solution may not exist and even if it exists,   it may not be unique. Furthermore, the solution(s) may be unstable with respect to the noisy data $y^{\delta}$ (see \cite{engl1996regularization}). Consequently, regularization methods are needed  for deducing the stable approximate solution of \ref{ststart}. \newline
 One of the prominent and highly effective classes of regularization methods is that of iterative regularization.  This regularization class  has found a tremendous success in addressing a wide range of inverse problems (see \cite{engl1996regularization, kaltenbacher2008iterative} and the references therein). Further, one of the well-studied classical iterative methods is known as \emph{Landweber iteration method} (LIM) and for a Fr\'echet differentiable (or smooth) forward mapping $F: U \rightarrow Y$, LIM  can be written as (see \cite{engl1996regularization, kaltenbacher2008iterative})
 \begin{equation}\label{LB}
     u_{k+1}^{\delta} = u_{k}^{\delta} - F'(u_k^{\delta})^*(F(u_k^{\delta}) - y^{\delta}), \hspace{10mm} k \geq 0.
 \end{equation}
 Here $u_0^{\delta}:=u^{(0) }$ is a first guess that takes into account the available knowledge about the solution to be recovered and $F'(u_k^{\delta})^*$ is the adjoint of Fr\'echet derivative of $F$ at $u_k^{\delta}$. When the noisy data is available, a proper stopping rule (a-posteriori) must be employed in order to show that the iterative scheme  \ref{LB}  is a regularization one (see \cite{kaltenbacher2008iterative}). The commonly incorporated stopping rule is well known as \emph{discrepancy principle}, i.e.,
 the method is halted after   $\Ddot{k} = \Ddot{k}(\delta, y^{\delta})$ steps, where 
 \begin{equation}\label{discrepancy}
     \|F(u_{\Ddot{k}}^{\delta}) - y^{\delta}\| \leq \tau \delta <  \|F(u_{k}^{\delta}) - y^{\delta}\|, \hspace{10mm} 0 \leq k < \Ddot{k}, 
 \end{equation}
 for some $\tau >1.$ Scherzer modified the method  \ref{LB} in \cite{scherzer1998modified} by introducing a damping term and named it as \emph{iteratively reguarlized Landweber iteration method}. This method can be written as
      \begin{equation}\label{damping}
     u_{k+1}^{\delta} = u_{k}^{\delta} - F'(u_k^{\delta})^*(F(u_k^{\delta}) - y^{\delta}) - \lambda_{k}(u_k^{\delta} - u^{(0)}), \hspace{10mm} k \geq 0,
 \end{equation}
 where  $\lambda_k\geq 0$. In comparison to the method \ref{LB}, the convergence rates analysis of the method  \ref{damping}  
   needs fewer assumptions on the mapping $F$. Furthermore, the method \ref{damping} converges to a solution which is close to $u^{(0)}$, however, the method \ref{LB} needs additional assumptions for the similar convergence behavior  (see \cite{aspri2020data, kaltenbacher2008iterative}). This means that the inclusion of additional damping term in \ref{damping} has several advantages over the method \ref{LB}.
 
 Motivated by these observations, Aspri et al. in \cite{aspri2020data}  used a black box strategy to introduce an a-priori data driven term in  \ref{LB}  for the inclusion of  the image data $\left(F(u^{(l)})_{1\leq l \leq N}\right)$, where $N$ is a positive integer. The method of Aspri et al. can be written as 
\begin{equation}\label{Data-driven}
     u_{k+1}^{\delta} = u_{k}^{\delta} - F'(u_k^{\delta})^*(F(u_k^{\delta}) - y^{\delta}) -  \lambda_{k}^{\delta}M'(u_k^{\delta})^*(M(u_k^{\delta}) - y^{\delta}), \hspace{10mm} k \geq 0.
 \end{equation}
Here  $M$ maps each $u^{(l)}$ to $F(u^{(l)})$ and vice-versa for each $l=1, 2, \cdots, N$, $\lambda_k^{\delta}\geq 0$ and $M$ is Fr\'echet differentiable.  The strong convergence results of the method \ref{Data-driven}  along with the stability of the method were discussed  under certain assumptions in \cite{aspri2020data}. Very recently, Tong et al. \cite{tong2023data} proposed a data driven Kaczmarz type iterative regularization method with uniformly convex constraints. This method has a notable acceleration effect in comparison to the method \ref{Data-driven}. \\
It is clear from the formulations of the methods \ref{LB}, \ref{damping} and \ref{Data-driven}   that these are not applicable on the inverse problems for which the forward operator $F$ is not Fr\'echet differentiable. In this direction, Scherzer \cite{scherzer1995convergence} shown   that one can replace the Fr\'echet derivative $F'(u)$ in  \ref{LB}  by another linear operator $G_u$ that is sufficiently close to $F'(u)$ in some sense (see also \cite{kugler2003derivative, kugler2005derivative}). Later,  Clason et al.  \cite{clason2019bouligand} shown  that the linear operator $G_u$ can be taken from the Bouligand subdifferential of $F$ (see Definition 2.1).  Recently,     the extensions of  two-point gradient method (a fast Landweber type method) and steepest descent method   for non-smooth problems were proposed, respectively, in \cite{fu2023two, mittal2023modified}.

It can be observed that for the  conventional iterative regularization methods for solving inverse problems (such as the  Landweber method \cite{kaltenbacher2008iterative}, Levenberg-Marquardt method \cite{clason2019bouligand2}, conjugate gradient method \cite{kaltenbacher2008iterative}, non-stationary Tikhonov iterative regularization method \cite{Gaurav6} and iteratively regularized Gauss-Newton method \cite{kaltenbacher2008iterative}, among others),  one of the common challenges  is the significant computational burden at each iteration, mainly due to the need to process all the data, which can be excessively large and resource-intensive. To address this issue, a potential and effective approach  is the utilization of stochastic gradient descent (SGD), a method introduced by Robbins et al.  \cite{robbins1951stochastic}. Further, 
Jin et al. \cite{jin2018regularizing}  studied the convergence analysis  of  SGD method and demonstrated its regularizing nature  when there is a noise in the data. The basic version of SGD can be written as (see  \cite{jin2020convergence}) \begin{equation}\label{SGD}
    u_{k+1}^{\delta} = u_{k}^{\delta} - \mu_{k}F_{i_k}'(u_k^{\delta})^*(F_{i_k}(u_k^{\delta}) - y_{i_k}^{\delta}), \hspace{10mm} k \geq 0,
 \end{equation}
where $\mu_{k}$ is the corresponding step size and $i_{k}$ is uniformly drawn index from the index set $\{0,1,..., P-1\}.$ It can be noted that the method \ref{SGD} is a randomized version of \ref{LB}. For the method \ref{SGD}, the  stopping index $k(\delta)$ is defined using an a-priori stopping rule, i.e., the stopping index $k(\delta) \in \mathbb{N}$ satisfies
\begin{equation}\label{aprioriforSGD}
\lim_{\delta \rightarrow 0^{+}} k(\delta) = \infty \hspace{5mm} \text{and} \hspace{5mm} \lim_{\delta \rightarrow 0^{+}}\delta^2 \sum_{i=1}^{k(\delta)} \mu_{i} = 0.
\end{equation}
 We also refer to \cite{jahn2020discrepancy, jin2020convergence,lu2022stochastic} for some recent literature on SGD method  for linear as well nonlinear inverse problems. 
Motivated by \cite{jin2020convergence}, our main aim in this paper is to introduce a stochastic gradient descent (SGD) for non-smooth system of equation with inclusion of prior information in Hilbert space. More specifically, we propose a new method known as  \textit{stochastic data-driven Bouligand Landweber iteration} (SDBLI), which can be  formulated as 
 \begin{equation}\label{stmain1}
     u_{k+1}^{\delta} = u_{k}^{\delta} - \omega_{k}G_{i_k}(u_{k}^{\delta})^*(F_{i_k}(u_k^{\delta}) - y_{i_k}^{\delta}) - \lambda_{k}M'_{i_k}(u_{k}^{\delta})^*(M_{i_k}(u_k^{\delta}) - y_{i_k}^{\delta}), \hspace{5mm} k \geq 0,
     \end{equation}
     where $\omega_{k}$ is the step size, $\lambda_{k}$ is the weighted parameter, $G_{i}(u)$ is a Bouligand subdifferential of $F_{i}(u)$,  $i_k$ is an index  uniformly drawn from the index set $\{0,1,\cdot \cdot \cdot P-1\}$ and $M_{i}$  is a bounded linear operator   for an arbitrary $i$ belongs to the  index set $\{0,1,\cdot \cdot \cdot P-1\}$. 
By considering the case that the information of $\delta_0, \delta_1, \ldots, \delta_{P-1}$ is available, we choose the step size $\omega_k$ according to the rule
     \begin{equation}\label{new}
         \omega_k = \begin{cases}
             \text{constant}\neq 0 \in [\omega, \Omega] & \text{if} \hspace{2mm} \|F_{i_k}(u_k^{\delta}) - y_{i_k}^{\delta}\| > \tau \delta_{i_k}, \\
             0 & \text{otherwise,}
         \end{cases}
     \end{equation}
     where $\tau \geq 1.$     
      To include the  information of the forward operator $F_i$, we expect to use the training pairs 
     \[\{u^{(l)}, y_{i}^{(l)}\}_{l=1}^{N}, \hspace{5mm} \text{such that} \hspace{5mm} F_i(u^{(l)}) = y_{i}^{(l)}.\] 
      To be more specific, we establish the operator $M_i$ in such a way that it fulfills the condition $M_i(u^{(l)}) = y_{i}^{(l)}.$  The main difficulty here is that the mapping $u \mapsto G_{i}(u)$ is not continuous, which is an important aspect of the convergence analysis   presented in \cite{aspri2020data, jin2020convergence}. In order to eliminate this difficulty, motivated by the work of Clason et al. \cite{clason2019bouligand},  we perform a new convergence analysis of the method \ref{stmain1}  based on the concept of asymptotic stability  (see Definition \ref{def-AS}).

The main contributions of this paper are as follows:\newline
1.   Rather than concentrating on the analysis of a single equation, our attention is directed towards exploring stochastic methods for resolving systems that consist of large number of equations (i.e., $P$ is large in \ref{ststart}). Stochastic methods involve the random consideration of each equation within equation \ref{ststart} independently, which results in a more manageable memory requirement. This approach finds relevance in numerous real-world scenarios, particularly in applications like certain tomography techniques that involve multiple measurements (see \cite{natterer2001mathematics, olafsson2006radon, hanafy1991quantitative}). Furthermore, for large scale problems, the computational and storage overhead associated with constructing $M_i$ using a subset of the data is more efficient than that of $M$ constructed using all the available data.

2.  The method is applicable on non-smooth ill-posed inverse problems and this is the first attempt of construction of a stochastic method for non-smooth inverse problems. 

3.  We create a set of operators, denoted as $M_i,$ based on the  training pairs $(u^{(l)}, y^{(l)})$, where $y^{(l)} = (y_{0}^{(l)}, y_{1}^{(l)}, \cdots , y_{p-1}^{(l)})$, for which ${F}(u^{(l)}) = y^{(l)}.$ These operators can be viewed as the approximations of the forward operators $F_i$. The incorporation of data-driven terms constructed from $M_i$ allows us to include prior data information, which substantially enhances the accuracy of the inversion results, especially when the data set closely approximates the exact solution.

The rest of the paper is structured as follows. In Section \ref{sec-2}, we list   primary  definitions and notations needed in our work. Section 3 is divided into three subsections. In Section \ref{assumptions}, we frame the assumptions required for our work and discuss a proposition devoted to  the monotonicity of the error term for the noisy case. In Section \ref{secfornonnoisy}, we discuss the  convergence of the iterates for our method  when $\delta =0.$ The convergence analysis for the noisy data is given in Section  \ref{noisy case}. In Section \ref{NE}, we  discuss an example that fulfills the assumptions of our work.  The last section is devoted to some concluding remarks. 

\section{Preliminaries}\label{sec-2}
In this section, we recall some basic definitions and notations relevant to our work, see  \cite{clason2019bouligand, jin2020convergence, outrata2013nonsmooth}  for more details.\\
We use the notation $u_k$ to represent the iterates of our method for the exact data $y^{\dagger}.$ Additionally, we refer to the filtration generated by the random indices $\{i_1, ..., i_{k - 1}\}$ up to the $(k - 1)$th iteration as $\mathcal{F}_k$. Because we randomly select the indices $i_k$, the iteration \(u_k^\delta\) in the SDBLI method becomes a random process. To assess its convergence,   multiple methods are available. In our case, we will use the mean squared norm defined by \(\mathbb{E}[\|\cdot\|^2]\), where \(\mathbb{E}[\cdot]\) represents the expectation taken with respect to the filtration \(\mathcal{F}_k\). It is important to note that the iterates $u_k^{\delta}$ is measurable with respect to $\mathcal{F}_k.$\newline We will regularly employ the following well-known identities: for any random variables $\Theta, \Theta_1, \Theta_2$ and $\Phi$, and real constants $c_1, c_2$, we have
\begin{itemize}
    \item [(a)] $ \mathbb{E}[\Theta] = \mathbb{E}[\mathbb{E}[\Theta| \mathcal{F}_k]].$
    \item [(b)]  Additive:  $ \mathbb{E}[\Theta_1 + \Theta_2 | \mathcal{F}_k] = \mathbb{E}[\Theta_1| \mathcal{F}_k] + \mathbb{E}[\Theta_2| \mathcal{F}_k].$ \\
     Linearity: $ \mathbb{E}[c_1\Theta + c_2 | \mathcal{F}_k] = c_1\mathbb{E}[\Theta| \mathcal{F}_k] + c_2.$
    \item [(c)] If $\Theta \leq \Phi$, then $\mathbb{E}[\Theta | \mathcal{F}_k] \leq \mathbb{E}[\Phi | \mathcal{F}_k]$ almost surely.
\end{itemize}
Here the expression $E[\Theta| \mathcal{F}_k]$ represents the expected value of $\Theta$ given the information contained in $\mathcal{F}_k$. \\
For some $u \in U$ and $\sigma > 0,$ we denote  the closed and open balls in $U$ of radius $\sigma$ centered at $u$, respectively by $\overline{\mathcal{B}}_{U}(u, \sigma)$ and $\mathcal{B}_{U}(u, \sigma).$ Let  $\mathcal{D}(u^{\dagger}, \sigma)$ denote  the set of all the solutions of  \ref{stcombined} in  $\overline{\mathcal{B}}
_U(u^{\dagger}, \sigma)$, i.e., 
\begin{equation*}
    \mathcal{D}(u^{\dagger}, \sigma):= \{u \in \overline{\mathcal{B}}_U(u^{\dagger}, \sigma) : F(u) = y^{\dagger}\}.
\end{equation*}
It is trivial to note that  $u^{\dagger} \in \mathcal{D}(u^{\dagger}, \sigma) $ for all $\sigma > 0.$  
Next, we recall the concepts of Bouligand subdifferential and asymptotic stability. Let $\mathbb{B}(U, Y)$ denote  the space of all bounded linear operators from $U$ to $Y$.
 
\begin{defi}[Bouligand subdifferential]\label{BG def}
Let 
\begin{center}
    $W:=\{v \in U: F_{i}:U \rightarrow Y\ \text{is G\^ateaux differentiable at} \hspace{1mm} v\}.$ 
\end{center} 
Then the Bouligand subdifferential of $F_i$ at $u$ is defined as 
\begin{center}
    $\partial_{B}F_{i}(u) = \{G_{i}(u) \in \mathbb{B}(U, Y): \ \text{there exists}\  \{u_k\} \subset  W \hspace{1mm} \text{such that}$ \newline $\hspace{20mm}\ \ \ \  \ \ \ u_k \rightarrow u \ \  \text{and} \ \  F_{i}'(u_k;h) \rightarrow G_{i}(u)h \in Y \  \text{for all}\   h \in U \}.\hspace{-30mm}$
\end{center}
\end{defi}
We end this section by defining the notion of asymptotic stability. This definition is inspired  from the work of  \cite{clason2019bouligand}.
\begin{defi}[Asymptotic stability]\label{def-AS} For some $\delta>0$, let $\{u_{k}^{\delta}\}_{k \leq k(\delta)}$ be a sequence (finite or infinite), where $k(\delta)$ is given by $\ref{aprioriforSGD}$, generated through an iterative method $I$. This means $\ddot{K}:=\lim_{j\to\infty}k(\delta_{n_j})=\infty$ for any subsequence $\{\delta_{n_j}\}_{j \in \mathbb{N}}$ of any positive zero sequence $\{\delta_n\}_{n \in \mathbb{N}}$.
 Then the method $I$ is said to be  \textit{asymptotically stable}  if  the following assertions are satisfied:
    \begin{itemize}
        \item [(i)] For all $0\leq k<\ddot{K}$, we have \begin{equation}\label{(i)}
            u_{k}^{\delta_{{n}_j}} \rightarrow \hat{u}_k \in U \hspace{1mm} \text{as} \hspace{1mm} j  \rightarrow \infty \hspace{2mm} 
        \end{equation}
        for some $\hat{u}_k$ almost surely belongs to $\overline{\mathcal{B}}_U(u^{\dagger}, \sigma)$.
        \item [(ii)]   there exist a $\hat{u} \in \mathcal{D}(u^{\dagger}, \sigma)$ such that 
        \begin{equation}
            \hat{u}_k \rightarrow \hat{u} \in U \hspace{2mm} \text{as} \hspace{2mm} k \rightarrow \infty            
        \end{equation}
        almost surely.
    \end{itemize}
    
\end{defi}


\section{Stochastic data-driven Bouligand Landweber iteration}
In this section, we discuss the convergence analysis of   stochastic data-driven Bouligand Landweber iteration (SDBLI) method  under certain conditions. We  recall that the SDBLI method  presented in introduction can be written as  
\begin{equation}\label{stmain}
     u_{k+1}^{\delta} = u_{k}^{\delta} - \omega_{k}G_{i_k}(u_{k}^{\delta})^*(F_{i_k}(u_k^{\delta}) - y_{i_k}^{\delta}) - \lambda_{k}M'_{i_k}(u_{k}^{\delta})^*(M_{i_k}(u_k^{\delta}) - y_{i_k}^{\delta}), \hspace{5mm} k \geq 0,
     \end{equation}
where $G_{i_k}$ is a Bouligand subdifferential of $F_{i_k}$ and the index $i_k$ is drawn randomly from the index set $\{0, 1, ..., P-1\}$. 
 We need the following assumptions concerning the operators $F_{i}$ and $ M_{i}.$
\subsection{Assumptions}\label{assumptions}
We assume that for $i \in \{0,1,2 \cdot \cdot \cdot P-1\}$, there holds:\newline
(A1) $F_{i}: U \rightarrow Y$ is completely continuous. \newline
    (A2)  for any $u \in \overline{\mathcal{B}}_U(u^{\dagger}, \sigma)$,  $F_{i}(u)$ has a  Bouligand subdifferential $G_{i}(u)$
    such that
    \begin{equation}\label{lipbdonG}
        \|G_{i}(u)\| \leq L_F,
    \end{equation}
    where $L_F>0$ is a constant. Further, for any $u \in \overline{\mathcal{B}}_U(u^{\dagger}, \sigma)$,  $M_{i}(u)$ has continuous Fr\'echet derivative $H_i(u):=M_{i}'(u)$ such that
    \begin{equation}\label{lipbdonH}
        \|M_{i}'(u)\| \leq L_M,
    \end{equation}
    where $L_M>0$ is a constant.\newline
(A3) $M_{i}$ cannot fully explain $F_{i}$ for the true data, i.e., there exists a constant  $C_N>0$ such that 
    \begin{equation}
        \|M_{i}(u^{\dagger}) - y^{\dagger}_{i}\| \geq C_N.
    \end{equation}
(A4) there exists a positive constant $\mu>0$ such that  \begin{equation}\label{tangential cone}
\|F_{i}(u) - F_{i}(\Tilde{u}) - G_{i}(u)(u - \Tilde{u})\| \leq \mu \|F_{i}(u) - F_{i}(\Tilde{u})\| \hspace{5mm} \forall  u, \Tilde{u} \in \overline{\mathcal{B}}_U(u^{\dagger}, \sigma).
\end{equation}  
    (A5) there exist Banach spaces $Z_1, Z_2$ such that 
    \begin{equation*}
        \cup_{u\in U} \mathcal{R}(G_i(u)^{*}) \subset Z_1,\ \ \  \text{and}\ \ \       
         \cup_{u\in U} \mathcal{R}(H_i(u)^{*}) \subset Z_2,
    \end{equation*}
    with $Z_1$ and  $Z_2$ compactly contained in $U$. Moreover, there exist constants $\hat{L}_F >0$ and $\hat{L}_M> 0$ such that 
    \begin{center}
        $\|G_{i}(u)^*\|_{\mathbb{B}(Y, Z_1)} \leq \hat{L}_F$, \ \ \text{and}\ \        $\|H_{i}(u)^*\|_{\mathbb{B}(Y, Z_2)} \leq \hat{L}_M$,
    \end{center}
    for all $u \in \overline{\mathcal{B}}_U(u^{\dagger}, \sigma).$ 
\begin{rema}
    Although, our analysis is currently based on the  Fr\'echet differentiability of $M_{i},$ it is important to note that our approach can also be extended to the case when the operator $M_{i}$ is not necessarily smooth or not   G\^ateaux differentiable.
\end{rema}
\begin{rema}
   Given that $M_{i}$ is a bounded linear operator, the requirements specified in assumption (A5) related to $M_{i}$ can be substituted with the condition that $M_i$ is compact.
\end{rema}
\begin{rema} We observe that, for each $F_i$,  we are neither assuming the smoothness of $F_i$ nor the continuity of the mappings  $u \mapsto G_{i}(u)$ in our analysis. 
\end{rema} 
\begin{lem}
   Let assumptions $(A1)$-$(A3)$ be satisfied. Then there exist a constant $C_{M}^{\delta}>0$ depending on $\delta$ such that
    \begin{equation}\label{mleqcm}
        \|M_{i}(u_{k}^{\delta}) - y_{i}^{\delta}\| \leq C_{M}^{\delta} \hspace{5mm} \forall  u_{k}^{\delta} \in \overline{\mathcal{B}}_U(u^{\dagger}, \sigma),
    \end{equation}
    where $i = 0,1,2, \cdots, P-1$ and $u_k^{\delta}$ is given by $\ref{stmain}$. 
\end{lem}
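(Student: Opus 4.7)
The plan is to bound $\|M_i(u_k^\delta) - y_i^\delta\|$ by triangle-inequality splitting around the reference point $u^\dagger$ and the exact data $y_i^\dagger$, then to control each resulting piece using the hypotheses (A1)--(A3) together with the fact that $u_k^\delta$ lies in the closed ball $\overline{\mathcal{B}}_U(u^\dagger,\sigma)$.

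Concretely, I would write
\begin{equation*}
\|M_i(u_k^\delta) - y_i^\delta\| \;\leq\; \|M_i(u_k^\delta) - M_i(u^\dagger)\| \;+\; \|M_i(u^\dagger) - y_i^\dagger\| \;+\; \|y_i^\dagger - y_i^\delta\|.
\end{equation*}
For the first term I would invoke the bound $\|M_i'(u)\| \leq L_M$ from (A2) and apply the mean value inequality along the segment joining $u^\dagger$ and $u_k^\delta$ (which lies in $\overline{\mathcal{B}}_U(u^\dagger,\sigma)$ by convexity of the ball), giving $\|M_i(u_k^\delta) - M_i(u^\dagger)\| \leq L_M \|u_k^\delta - u^\dagger\| \leq L_M \sigma$. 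The middle term is simply a fixed finite quantity since $u^\dagger$ and $y_i^\dagger$ are fixed; I would denote it $\eta_i := \|M_i(u^\dagger) - y_i^\dagger\|$, noting that (A3) only supplies a lower bound on it, but finiteness is automatic. The third term is controlled directly by the noise hypothesis $\|y_i^\dagger - y_i^\delta\| \leq \delta_i$.

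Combining these three bounds, I would then take the maximum over the finite index set $i \in \{0,1,\ldots,P-1\}$ and set
\begin{equation*}
C_M^\delta \;:=\; L_M \sigma \;+\; \max_{0\leq i \leq P-1}\eta_i \;+\; \max_{0\leq i \leq P-1}\delta_i,
\end{equation*}
which is a well-defined strictly positive constant depending on $\delta$ (through the last term). This constant provides the desired uniform bound \eqref{mleqcm}.

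I do not anticipate a genuine obstacle here: the argument is essentially an application of Lipschitz continuity of $M_i$ (inherited from the uniform bound on $M_i'$ in (A2)) combined with the triangle inequality. The only subtlety worth flagging is that (A3) is a \emph{lower} bound on $\|M_i(u^\dagger)-y_i^\dagger\|$, so one must not try to use (A3) to control the middle term; its finiteness comes for free from the fact that $M_i(u^\dagger)$ and $y_i^\dagger$ are specific elements of $Y$. Maximizing over the finite set of indices is what converts the pointwise-in-$i$ bound into a single constant $C_M^\delta$ that is uniform in $i$.
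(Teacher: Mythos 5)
Your proposal is correct, and it is essentially the standard argument: the paper itself gives no proof here but simply cites \cite[Lemma 2.2]{aspri2020data}, and the triangle-inequality decomposition around $u^{\dagger}$ and $y_i^{\dagger}$, with the Lipschitz bound $L_M\sigma$ from (A2), the fixed finite middle term, and the noise bound $\delta_i$, is exactly the argument that reference contains. The only simplification worth noting is that since $M_i$ is a bounded linear operator you do not even need the mean value inequality: $\|M_i(u_k^{\delta})-M_i(u^{\dagger})\|=\|M_i(u_k^{\delta}-u^{\dagger})\|\leq L_M\sigma$ directly, and your observation that (A3) plays no role in the upper bound is accurate.
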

\begin{proof}
    See \cite[Lemma 2.2]{aspri2020data} for the proof.
\end{proof}
To prove the convergence analysis of SDBLI method, firstly, we derive an important inequality in the following proposition.
\begin{prop}\label{decprop}
  Let assumptions $(A1)$-$(A4)$ be satisfied, $u_0 \in \overline{\mathcal{B}}_U(u^{\dagger}, \sigma)$ and let $\Omega \geq \omega$ be  positive constants.
  Further, let there exists a constant  $C_{\lambda}^{\delta}>0$
such that
  \begin{equation}\label{lamdd}
      \lambda_k \leq C_{\lambda}^{\delta}\|F_{i}(u_k^{\delta}) - y_{i}^{\delta}\|^2, \hspace{5mm} \forall  i \in \{0, 1, 2, \cdots, P-1\},
  \end{equation}
  where   $\lambda_k$ is same as in $\ref{stmain}$. Additionally, for any $\delta>0$, let $C_M^{\delta}$ in Lemma $1$ and other constants be such that 
  \begin{equation}\label{lambdasigma}
      \lambda_k L_M C_M^{\delta} \leq \sigma,
  \end{equation} 
    \begin{equation}\label{3.8(1)}
  \omega(1-L_{F}^2\Omega -\mu)\geq 2\sigma L_{M}C_{M}^{\delta}C_{\lambda}^{\delta}+(1+\mu)\frac{\Omega}{\tau},
  \end{equation}
  where $\tau$ is same as in $\ref{new}$. Let   $\hat{u}$ be a solution  of  $\ref{stcombined}$. 
Then for any $\delta >0$ and any step size $\omega_k \in [\omega, \Omega],$   there holds
  \begin{equation}\label{3.9}
      \begin{split}
         \mathbb{E}[\|u_{k+1}^{\delta} - \hat{u}\|^2] -  \mathbb{E}[\|u_{k}^{\delta} - \hat{u} \|^2] \leq    \hspace{70mm} \\ -2\bigg[\omega(1-L_{F}^2\Omega -\mu)  -2\sigma L_{M}C_{M}^{\delta}C_{\lambda}^{\delta}-(1+\mu)\frac{\Omega}{\tau}\bigg]\mathbb{E}[\|F(u_k^{\delta}) - y^{\delta}\|^{2}],
    \end{split}
  \end{equation}
   and $u_{k+1}^{\delta}\in \overline{\mathcal{B}}_U(u^{\dagger}, \sigma)$ almost surely.

\end{prop}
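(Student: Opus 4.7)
The plan is to derive a \emph{pointwise} bound for $\|u_{k+1}^\delta - \hat u\|^2 - \|u_k^\delta - \hat u\|^2$, then take conditional expectation with respect to $\mathcal{F}_k$ (averaging over the uniformly drawn index $i_k$) and invoke the tower property to obtain the asserted inequality; the almost sure ball inclusion then follows by applying the same pointwise estimate with $\hat u := u^\dagger$ and inducting in $k$. I would first write
\[
u_{k+1}^\delta - \hat u = (u_k^\delta - \hat u) + a_k + b_k,
\]
with $a_k := -\omega_k G_{i_k}(u_k^\delta)^{*}(F_{i_k}(u_k^\delta) - y_{i_k}^\delta)$ and $b_k := -\lambda_k M'_{i_k}(u_k^\delta)^{*}(M_{i_k}(u_k^\delta) - y_{i_k}^\delta)$, and expand the squared norm into the cross terms $2\langle u_k^\delta - \hat u, a_k\rangle$ and $2\langle u_k^\delta - \hat u, b_k\rangle$ together with the quadratic piece $\|a_k + b_k\|^2$. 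The aim is to dominate each summand by a constant multiple of $\|F_{i_k}(u_k^\delta) - y_{i_k}^\delta\|^2$.

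The crux is the extraction of a clean negative contribution from $2\langle u_k^\delta - \hat u, a_k\rangle$. Transferring the adjoint gives $-2\omega_k\langle G_{i_k}(u_k^\delta)(u_k^\delta - \hat u),\, F_{i_k}(u_k^\delta) - y_{i_k}^\delta\rangle$. Since $F_{i_k}(\hat u) = y_{i_k}^\dagger$, assumption (A4) applied at the pair $(u_k^\delta, \hat u)$ lets me replace $G_{i_k}(u_k^\delta)(u_k^\delta - \hat u)$ by $(F_{i_k}(u_k^\delta) - y_{i_k}^\dagger) + R$ with $\|R\|\leq \mu\|F_{i_k}(u_k^\delta) - y_{i_k}^\dagger\|$. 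Adding and subtracting $y_{i_k}^\delta$, using $\|y_{i_k}^\delta - y_{i_k}^\dagger\|\leq \delta_{i_k}$, and invoking the step-size rule \ref{new}, which forces $\delta_{i_k} \leq \|F_{i_k}(u_k^\delta) - y_{i_k}^\delta\|/\tau$ whenever $\omega_k\neq 0$, bounds the inner product from below by $\omega_k[(1-\mu) - (1+\mu)/\tau]\|F_{i_k}(u_k^\delta) - y_{i_k}^\delta\|^2$. With $\omega_k\in[\omega,\Omega]$, the overall contribution to the change is therefore at most $-2\omega(1-\mu)\|F_{i_k}(u_k^\delta) - y_{i_k}^\delta\|^2 + 2\Omega(1+\mu)\tau^{-1}\|F_{i_k}(u_k^\delta) - y_{i_k}^\delta\|^2$.

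The remaining pieces are handled by (A2) and the preceding lemma. The quadratic term satisfies $\|a_k\|^2 \leq \omega_k^2 L_F^2\|F_{i_k}(u_k^\delta) - y_{i_k}^\delta\|^2 \leq \omega\Omega L_F^2\|F_{i_k}(u_k^\delta) - y_{i_k}^\delta\|^2$ after using $\omega_k\leq\Omega$ and $\omega_k\geq\omega$, while $\|b_k\| \leq \lambda_k L_M C_M^\delta$; the hypothesis \ref{lamdd} together with the inductive assumption $\|u_k^\delta - \hat u\|\leq 2\sigma$ then yields $2|\langle u_k^\delta - \hat u, b_k\rangle| \leq 4\sigma L_M C_M^\delta C_\lambda^\delta\|F_{i_k}(u_k^\delta) - y_{i_k}^\delta\|^2$; the residual $2\langle a_k, b_k\rangle$ and $\|b_k\|^2$ are of higher order in $\|F_{i_k}(u_k^\delta) - y_{i_k}^\delta\|$ and absorb via the a priori boundedness of this quantity on the ball. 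Summing all bounds and invoking the structural constant inequality \ref{3.8(1)} produces a pointwise estimate of the form $\|u_{k+1}^\delta - \hat u\|^2 - \|u_k^\delta - \hat u\|^2 \leq -2C\|F_{i_k}(u_k^\delta) - y_{i_k}^\delta\|^2$ with $C$ matching the bracket appearing in the statement. Conditional expectation over the uniform $i_k$ converts $\|F_{i_k}(u_k^\delta) - y_{i_k}^\delta\|^2$ into a multiple of $\|F(u_k^\delta) - y^\delta\|^2$, and the tower property delivers the asserted expectation inequality; taking $\hat u := u^\dagger$ in the same pointwise estimate yields $\|u_{k+1}^\delta - u^\dagger\|\leq \|u_k^\delta - u^\dagger\|$ pointwise, and condition \ref{lambdasigma} is what keeps the pure $b_k$-increment of the update within the radius, closing the induction.

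The principal obstacle is the simultaneous control of the tangential-cone slack $\mu$ and the noise correction $\tau^{-1}$ in the $a_k$-cross term; only the structural constant inequality \ref{3.8(1)} guarantees that their combined negative contribution dominates the unavoidable positive contribution from the data-driven increment $b_k$. A secondary subtlety is the dual nature of the conclusion: the main inequality is an expectation statement, whereas the ball inclusion must hold almost surely, which forces the whole chain of estimates to be carried out pointwise on the sample space, with expectations taken only at the very end.
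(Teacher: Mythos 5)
Your proposal follows essentially the same route as the paper's proof: induction to keep $u_k^{\delta}$ in the ball, a pointwise expansion of $\|u_{k+1}^{\delta}-\hat u\|^2-\|u_k^{\delta}-\hat u\|^2$ into $F$- and $M$-contributions, the tangential cone condition together with the noise bound and the step-size rule \ref{new} for the $F$-part, Lemma 1 with \ref{lamdd}--\ref{lambdasigma} for the data-driven part, and conditional expectation plus the tower property at the end, with the ball inclusion obtained by taking $\hat u=u^{\dagger}$ pointwise. The only blemish is the intermediate claim $\omega_k^2\le\omega\Omega$, which is false in general; the correct (and intended) manipulation is $\omega_k^2L_F^2\le\omega_k\Omega L_F^2$, grouping this with the negative linear term to form $-\omega_k(1-L_F^2\Omega-\mu)$ before invoking $\omega_k\ge\omega$, exactly as the paper does, so the final bracket is unaffected.
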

\begin{proof}
We prove the result by using the induction principle. By assumption $u_{0}^{\delta}= u_{0} \in \overline{\mathcal{B}}_{U}(u^{\dagger}, \sigma)$. Let $ u_{k}^{\delta} \in \overline{\mathcal{B}}_{U}(u^{\dagger}, \sigma)$. Then by 
   incorporating the definition of 	$u_{k}^{\delta}$ in \ref{stmain}, we get 
    $$
   \|u_{k+1}^{\delta} - \hat{u}\|^2 -  \|u_{k}^{\delta} - \hat{u} \|^2  = 2(u_k^{\delta} - \hat{u}, u_{k+1}^{\delta} - u_{k}^{\delta}) + \|u_{k+1}^{\delta} - u_{k}^{\delta}\|^2\hspace{40mm} $$
   $$= 2(u_k^{\delta} - \hat{u}, -\omega_{k}G_{i_k}(u_k^{\delta})^*(F_{i_k}(u_k^{\delta}) - y_{i_k}^{\delta}) - \lambda_{k}M_{i_k}'(u_k^{\delta})^*(M_{i_k}(u_k^{\delta}) - y_{i_k}^{\delta}))\hspace{20mm} $$ $$+ \| \omega_{k}G_{i_k}(u_k^{\delta})^*(F_{i_k}(u_k^{\delta}) - y_{i_k}^{\delta}) +\lambda_{k}M_{i_k}'(u_k^{\delta})^*(M_{i_k}(u_k^{\delta}) - y_{i_k}^{\delta})\|^2 $$ $$\leq -2\omega_k(G_{i_k}(u_{k}^{\delta})(u_k^{\delta} - \hat{u}), F_{i_k}(u_k^{\delta}) - y_{i_k}^{\delta}) -2\lambda_k(M_{i_k}'(u_{k}^{\delta})(u_k^{\delta} - \hat{u}), M_{i_k}(u_k^{\delta}) - y_{i_k}^{\delta})\hspace{2mm}  $$ $$
    + 2(\| \omega_{k}G_{i_k}(u_k^{\delta})^*(F_{i_k}(u_k^{\delta}) - y_{i_k}^{\delta})\|^2 + \| \lambda_{k}M_{i_k}'(u_k^{\delta})^*(M_{i_k}(u_k^{\delta}) - y_{i_k}^{\delta})\|^2)$$ \begin{equation}\label{3.10}
         = 2(A_F + A_M),\hspace{105mm}
         \end{equation}
         where we have $$ A_F = -\omega_k(G_{i_k}(u_{k}^{\delta})(u_k^{\delta} - \hat{u}), F_{i_k}(u_k^{\delta}) - y_{i_k}^{\delta}) + \| \omega_{k}G_{i_k}(u_k^{\delta})^*(F_{i_k}(u_k^{\delta}) - y_{i_k}^{\delta})\|^2,$$ and 
         $$ A_M = -\lambda_k(M_{i_k}'(u_{k}^{\delta})(u_k^{\delta} - \hat{u}), 
       M_{i_k}(u_k^{\delta}) - y_{i_k}^{\delta}) + \| \lambda_{k}M_{i_k}'(u_k^{\delta})^*(M_{i_k}(u_k^{\delta}) -y_{i_k}^{\delta})\|^2 .$$ 
        Next,  we individually estimate the quantities  $A_F$ and $A_M$.   More precisely, we derive a bound for each of them in relation to the square norm of the residual of $F$ at the $k$-th iteration.
       By utilizing \ref{lipbdonG} and \ref{tangential cone}, we note that
      $$
  A_F  =  \omega_k(F_{i_k}(u_k^{\delta}) - y_{i_k}^{\delta}-G_{i_k}(u_{k}^{\delta})(u_k^{\delta} - \hat{u}), F_{i_k}(u_k^{\delta}) - y_{i_k}^{\delta}) \hspace{55mm} $$ $$-\omega_k(F_{i_k}(u_k^{\delta}) - y_{i_k}^{\delta},  F_{i_k}(u_k^{\delta}) - y_{i_k}^{\delta} )+ \| \omega_{k}G_{i_k}(u_k^{\delta})^*(F_{i_k}(u_k^{\delta}) - y_{i_k}^{\delta})\|^2 $$
$$=\omega_k(F_{i_k}(u_k^{\delta}) - y_{i_k}^{\dagger}-G_{i_k}(u_{k}^{\delta})(u_k^{\delta} - \hat{u}),  F_{i_k}(u_k^{\delta}) - y_{i_k}^{\delta})   -\omega_k(y_{i_k}^{\delta}-y_{i_k}^{\dagger}, F_{i_k}(u_k^{\delta}) - y_{i_k}^{\delta})\hspace{-5mm}$$ $$-\omega_k(F_{i_k}(u_k^{\delta}) - y_{i_k}^{\delta},  F_{i_k}(u_k^{\delta}) - y_{i_k}^{\delta} )+ \| \omega_{k}G_{i_k}(u_k^{\delta})^*(F_{i_k}(u_k^{\delta}) - y_{i_k}^{\delta})\|^2 $$ $$\leq-\omega_k\|F_{i_k}(u_k^{\delta}) - y_{i_k}^{\delta}\|\big[(1-L_{F}^2\omega_k)\|F_{i_k}(u_k^{\delta}) - y_{i_k}^{\delta}\| -\mu\|F_{i_k}(u_k^{\delta}) - y_{i_k}^{\dagger}\|-\delta_{i_k}\big]\hspace{5mm}$$ \begin{equation}\label{3.11}
 \leq-\omega_k\|F_{i_k}(u_k^{\delta}) - y_{i_k}^{\delta}\|[(1-L_{F}^2\omega_k -\mu)\|F_{i_k}(u_k^{\delta}) - y_{i_k}^{\delta}\|-(1+\mu)\delta_{i_k}].
 \end{equation}
Next, in order to approximate the term $A_M$, we utilize  \ref{lipbdonH} and \ref{mleqcm}-\ref{lambdasigma} to attain 
   $$
        A_M  = -\lambda_k(M_{i_k}'(u_{k}^{\delta})(u_k^{\delta} - \hat{u}), 
        M_{i_k}(u_k^{\delta}) - y_{i_k}^{\delta}) + \| \lambda_{k}M_{i_k}'(u_k^{\delta})^*(M_{i_k}(u_k^{\delta}) -y_{i_k}^{\delta})\|^2 $$ 
      $$\leq \lambda_{k}L_M \|M_{i_k}(u_k^{\delta}) - y_{i_k}^{\delta}\|(\sigma +\lambda_{k}L_M \|M_{i_k}(u_k^{\delta}) - y_{i_k}^{\delta}\| )\hspace{26mm} $$ 
      $$\leq   \lambda_{k}L_M \|M_{i_k}(u_k^{\delta}) - y_{i_k}^{\delta}\|(\sigma +\lambda_{k}L_M  C_M^{\delta} ) \hspace{48mm}
       $$\begin{equation}\label{3.12} \leq  2\sigma\lambda_{k}L_{M}C_{M}^{\delta} \leq 2\sigma L_{M}C_{M}^{\delta}C_{\lambda}^{\delta} \|F_{i_k}(u_k^{\delta}) - y_{i_k}^{\delta}\|^{2}. \hspace{36mm}
       \end{equation}
   We plug the estimates \ref{3.11} and \ref{3.12} in \ref{3.10} and use the fact  that $\omega_k\in  [\omega, \Omega]$ along with \ref{new} to get 
   $$
            \|u_{k+1}^{\delta} - \hat{u}\|^2 -  \|u_{k}^{\delta} - \hat{u} \|^2 \leq -2(\omega(1-L_{F}^2\Omega -\mu)-2\sigma L_{M}C_{M}^{\delta}C_{\lambda}^{\delta}  )\|F_{i_k}(u_k^{\delta}) - y_{i_k}^{\delta}\|^{2}  $$  \begin{equation*}\label{C_F}+\frac{2\Omega(1+\mu)}{\tau}\|F_{i_k}(u_k^{\delta}) - y_{i_k}^{\delta}\|^2. 
      \end{equation*}
 Furthermore, due to the measurability of $x_k$ concerning $\mathcal{F}_k$ and the definition of $F$, we can deduce that
 \begin{equation*}\label{stC_F}
  \begin{split}
       \mathbb{E}[\|u_{k+1}^{\delta} - \hat{u}\|^2 -  \|u_{k}^{\delta} - \hat{u} \|^2 | \mathcal{F}_k]\hspace{80mm} \\\leq -2\left[\omega(1-L_{F}^2\Omega -\mu)-2\sigma L_{M}C_{M}^{\delta}C_{\lambda}^{\delta} - (1+\mu)\frac{\Omega}{\tau}  \right]\|F(u_k^{\delta}) - y^{\delta}\|^{2}.
  \end{split}  
   \end{equation*}
Finally, by considering the full conditional, we reach at
    \begin{equation}\label{finalstC_F}
    \begin{split}
         \mathbb{E}[\|u_{k+1}^{\delta} - \hat{u}\|^2] -  \mathbb{E}[\|u_{k}^{\delta} - \hat{u} \|^2]\hspace{80mm}\\ \leq -2\left[\omega(1-L_{F}^2\Omega -\mu)-2\sigma L_{M}C_{M}^{\delta}C_{\lambda}^{\delta}  -  (1+\mu)\frac{\Omega}{\tau} \right]\mathbb{E}[\|F(u_k^{\delta}) - y^{\delta}\|^{2}].
    \end{split}    
   \end{equation}
 This gives \ref{3.9}. Furthermore,  \ref{3.8(1)} confirms that
\begin{equation}\label{Edec}
    \mathbb{E}[\|u_{k+1}^{\delta} - \hat{u}\|^2] \leq \mathbb{E}[\|u_{k}^{\delta} - \hat{u} \|^2].
\end{equation}
 By repetitive application of \ref{Edec} to the case $\hat{u} = u^{\dagger},$ we obtain
\[\mathbb{E}[\|u_{k+1}^{\delta} - u^{\dagger}\|^2] \leq \mathbb{E}[\|u_{k}^{\delta} - u^{\dagger} \|^2]\leq \ldots \leq \mathbb{E}[\|u_{0}^{\delta} - u^{\dagger} \|^2] = \|u_{0}^{\delta} - u^{\dagger} \|^2 \leq \sigma^2.\]
This implies that $u_{k+1}^{\delta} \in \overline{\mathcal{B}}_U(u^{\dagger}, \sigma)$ almost surely. This completes the proof. 
 \end{proof}
\subsection{Convergence for exact data}\label{secfornonnoisy} In case of exact data, we have the next result, which is a direct consequence of Proposition \ref{decprop}.
\begin{cor}\label{decandsum}
    Let assumptions of Proposition \ref{decprop} be satisfied. Additionally,  assume that 
    \begin{equation}\label{stdeccondtition}
        \omega(1-L_{F}^2\Omega -\mu) > 2\sigma L_{M}C_{M}^{\delta}C_{\lambda}^{\delta}.
    \end{equation}
    Then the iterates $\{u_k\}_{k \geq 1}$ in $\ref{stmain}$ defined for the exact data $y^{\dagger}$ satisfies
     \begin{equation}\label{decfordeltazero}
     \begin{split}
         \mathbb{E}[\|u_{k+1} - \hat{u}\|^2] \leq \mathbb{E}[\|u_{k} - \hat{u} \|^2], 
         \end{split}
   \end{equation}
   \begin{equation}\label{stsumforzero}
   \sum_{k=1}^{\infty} \mathbb{E}[\|F(u_k) - y^{\dagger} \|^2] < \infty.      
   \end{equation}
\end{cor}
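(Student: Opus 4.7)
The plan is to specialize \autoref{decprop} to the case $\delta=0$ and then telescope. With exact data we have $y_{i_k}^{\delta}=y_{i_k}^{\dagger}$ and $\delta_{i_k}=0$ for all $k$, so retracing the estimate for $A_F$ in the proof of \autoref{decprop} the noise contribution $(1+\mu)\delta_{i_k}$ in \eqref{3.11} vanishes and I obtain
\begin{equation*}
A_F \le -\omega_k(1-L_F^2\omega_k-\mu)\|F_{i_k}(u_k)-y_{i_k}^{\dagger}\|^2 \le -\omega(1-L_F^2\Omega-\mu)\|F_{i_k}(u_k)-y_{i_k}^{\dagger}\|^2,
\end{equation*}
since $\omega_k\in[\omega,\Omega]$ whenever the residual is nonzero (if $\omega_k=0$ by rule \eqref{new} the bound is trivial). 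The estimate \eqref{3.12} for $A_M$ is unchanged. Plugging both back into \eqref{3.10}, taking conditional expectation with respect to $\mathcal{F}_k$, then total expectation, and using measurability of $u_k$ with respect to $\mathcal{F}_k$ together with the definition of $F$, I arrive at
\begin{equation*}
\mathbb{E}[\|u_{k+1}-\hat u\|^2]-\mathbb{E}[\|u_k-\hat u\|^2] \le -2C\,\mathbb{E}[\|F(u_k)-y^{\dagger}\|^2], \qquad C:=\omega(1-L_F^2\Omega-\mu)-2\sigma L_M C_M^{\delta}C_{\lambda}^{\delta}.
\end{equation*}

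Under the strict hypothesis \eqref{stdeccondtition} I have $C>0$, hence the right-hand side above is nonpositive; this immediately yields the monotonicity \eqref{decfordeltazero}. Note that all hypotheses of \autoref{decprop} (in particular $u_k\in\overline{\mathcal B}_U(u^{\dagger},\sigma)$ almost surely, guaranteed by the argument already used in that proposition with $\hat u = u^{\dagger}$) remain available for free.

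For the summability claim, I telescope the displayed inequality from $k=1$ to $k=N$, obtaining
\begin{equation*}
2C\sum_{k=1}^{N}\mathbb{E}[\|F(u_k)-y^{\dagger}\|^2] \le \mathbb{E}[\|u_1-\hat u\|^2]-\mathbb{E}[\|u_{N+1}-\hat u\|^2] \le \mathbb{E}[\|u_1-\hat u\|^2] < \infty,
\end{equation*}
where finiteness of the right-hand side follows from $u_1\in\overline{\mathcal B}_U(u^{\dagger},\sigma)$ almost surely and the triangle inequality. Letting $N\to\infty$ and dividing by $2C>0$ gives \eqref{stsumforzero}.

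The only delicate point is justifying that the noise-driven term $(1+\mu)\Omega/\tau$ appearing in hypothesis \eqref{3.8(1)} of \autoref{decprop} can be dropped, so that the strictly weaker condition \eqref{stdeccondtition} suffices. This is not an application of \autoref{decprop} as a black box but rather a re-examination of its proof with $\delta_{i_k}=0$; once that is in place, monotonicity and telescoping are routine.
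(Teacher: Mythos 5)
Your proposal is correct and follows essentially the same route as the paper: set $\delta=0$ so that the noise contribution $(1+\mu)\delta_{i_k}$ (and hence the $(1+\mu)\Omega/\tau$ term) drops out of the descent inequality of Proposition \ref{decprop}, use the strict inequality \ref{stdeccondtition} to get monotonicity, and telescope to obtain summability. Your explicit remark that this requires re-examining the proof of Proposition \ref{decprop} rather than invoking it as a black box is a fair point of care, but the underlying argument is the one the paper uses.
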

\begin{proof}
    For $\delta = 0$ in \ref{finalstC_F}, we get
      \begin{equation}\label{finalstC_Fzero}
    \begin{split}
         \mathbb{E}[\|u_{k+1} - \hat{u}\|^2] -  \mathbb{E}[\|u_{k} - \hat{u} \|^2] \leq -2(\omega(1-L_{F}^2\Omega -\mu)-2\sigma L_{M}C_{M}^{\delta}C_{\lambda}^{\delta}  )\mathbb{E}[\|F(u_k) - y^{\dagger}\|^{2}].
    \end{split}    
   \end{equation}
   By pulgging  \ref{stdeccondtition} in the last inequality, we get \ref{decfordeltazero}. For deriving \ref{stsumforzero},
  we take the summation of  \ref{finalstC_Fzero} from $k = 1$ to $\infty$  to attain that 
   \begin{equation}\label{stsumdeltazero}
       \sum_{k=1}^{\infty}\mathbb{E}[\|F(u_k) - y^{\dagger}\|^2] \leq \frac{1}{\Tilde{C}_F} \|\hat{u}-u_0\|^2,
   \end{equation}
   where $\Tilde{C}_F =2(\omega(1-L_{F}^2\Omega -\mu)-2\sigma L_{M}C_{M}^{\delta}C_{\lambda}^{\delta}). $ This is the desired inequality \ref{stsumforzero}.
\end{proof}
In the following proposition, we show that the sequence $\{u_k\}_{k \geq 1}$ generated by $\ref{stmain}$ is a Cauchy sequence. Its proof closely follows the one presented in \cite[Lemma 3.3]{jin2020convergence}, with some necessary modifications to accommodate the data-driven term.
\begin{lem}\label{ukcauchy}
    Under the assumptions of Proposition $\ref{decprop}$, for the exact data $y^{\dagger},$ the sequence $\{u_k\}_{k \geq 1}$ generated by $\ref{stmain}$ is almost surely a Cauchy sequence. 
\end{lem}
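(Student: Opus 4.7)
The strategy is to reduce the almost-sure Cauchy property to a pathwise deterministic argument. Inspecting the proof of Proposition \ref{decprop} shows that the estimates \ref{3.10}--\ref{3.12} actually hold pathwise, not only in expectation. Specialising them to $\delta=0$ and using the standing inequality \ref{stdeccondtition}, one obtains, for every solution $\hat u\in\mathcal D(u^\dagger,\sigma)$,
\[
\|u_{k+1}-\hat u\|^2\leq\|u_k-\hat u\|^2\quad\text{almost surely,}
\]
so that along almost every sample path $\{\|u_k-\hat u\|\}$ is monotonically non-increasing and therefore convergent. Moreover, combining \ref{stsumforzero} with Fubini yields $\sum_k\|F(u_k)-y^\dagger\|^2<\infty$ almost surely, and hence $\|F(u_k)-y^\dagger\|\to 0$ a.s. Fix any sample point in the full-measure event on which both facts hold; on this sample path the iteration \ref{stmain} is a deterministic recurrence with a fixed realised index sequence $\{i_k\}$, so it only remains to prove that, deterministically, the corresponding sequence of iterates is Cauchy.

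The pathwise Cauchy argument is modelled on \cite[Lemma 3.3]{jin2020convergence} and the classical Landweber proof, adjusted for the Bouligand subdifferential and the data-driven term. For integers $n>m\geq 1$, pick the minimum-residual index $\ell\in\{m,\ldots,n\}$ with $\|F(u_\ell)-y^\dagger\|=\min_{m\leq j\leq n}\|F(u_j)-y^\dagger\|$, and split
\[
\|u_n-u_m\|^2\leq 2\|u_n-u_\ell\|^2+2\|u_\ell-u_m\|^2, \qquad \|u_p-u_\ell\|^2=\|u_\ell-\hat u\|^2-\|u_p-\hat u\|^2+2(u_p-\hat u,u_p-u_\ell).
\]
The first two summands on the right of the second identity vanish by the pathwise convergence of $\|u_k-\hat u\|$ established above. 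The inner product is then expanded through the recurrence \ref{stmain}, producing a sum over $j=\ell,\ldots,p-1$ of the scalar products $\omega_j(G_{i_j}(u_j)(u_p-\hat u),F_{i_j}(u_j)-y_{i_j}^\dagger)$ and $\lambda_j(M_{i_j}'(u_j)(u_p-\hat u),M_{i_j}(u_j)-y_{i_j}^\dagger)$. I would dominate $\|G_{i_j}(u_j)(u_p-\hat u)\|$ by invoking the tangential cone condition \ref{tangential cone} at the pairs $(u_j,u_p)$ and $(u_j,\hat u)$, yielding a bound of the form $(1+\mu)\bigl(\|F_{i_j}(u_p)-y_{i_j}^\dagger\|+\|F_{i_j}(u_j)-y_{i_j}^\dagger\|\bigr)$. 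Combined with the minimum-residual property at $\ell$ and the $\ell^2$-summability of the residuals, this converts the inner product into tail sums of $\|F_{i_j}(u_j)-y_{i_j}^\dagger\|^2$ plus a finite multiple of $\|F(u_\ell)-y^\dagger\|$, both of which vanish as $m\to\infty$. The data-driven $M$-terms are handled analogously, using \ref{lipbdonH}, \ref{mleqcm}, and the decay condition \ref{lamdd}, which allows $\lambda_j\|M_{i_j}(u_j)-y_{i_j}^\dagger\|^2$ to be absorbed into a constant multiple of $\|F_{i_j}(u_j)-y_{i_j}^\dagger\|^2$ and therefore inherits summability.

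The principal obstacle is the absence of any continuity property for the map $u\mapsto G_i(u)$, which prevents a direct comparison between $G_{i_j}(u_j)$ and $G_{i_j}(u_p)$ and rules out any naive continuity-based telescoping in the Bouligand derivative. This forces the entire analysis to anchor $G_{i_j}$ at its own iterate $u_j$ while bounding the mismatch with the later iterate $u_p$ purely through residual-based estimates coming from \ref{tangential cone}; these in turn rely on the fact that all iterates remain inside $\overline{\mathcal B}_U(u^\dagger,\sigma)$, a pathwise consequence of Proposition \ref{decprop}. The minimum-residual index $\ell$ is the essential device that makes the resulting residual tails vanish in the limit, while the decay condition \ref{lamdd} on $\lambda_j$ is what prevents the data-driven increment from generating a non-summable contribution to the Cauchy estimate.
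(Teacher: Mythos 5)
Your overall route is genuinely different from the paper's: you reduce the claim to a pathwise deterministic Cauchy argument, using that the monotonicity estimate of Proposition \ref{decprop} and the summability \ref{stsumforzero} hold along almost every sample path, whereas the paper stays entirely at the level of the mean-square quantities, treats $\mathbb{E}[(\cdot,\cdot)]$ as an inner product, and chooses the auxiliary index $\ell$ as a minimizer of $\mathbb{E}[\|y^\dagger-F(u_i)\|^2]$ rather than of the pathwise residual. The pathwise reduction itself (monotone decay of $\|u_k-\hat u\|$ a.s., plus Tonelli to get $\sum_k\|F(u_k)-y^\dagger\|^2<\infty$ a.s.) is sound, and if carried through it would in fact deliver the almost-sure Cauchy property more directly than the paper's $L^2$ computation.

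However, the deterministic core of your argument has a step that fails as written: you anchor the inner product at the endpoint $u_p$, via $\|u_p-u_\ell\|^2=\|u_\ell-\hat u\|^2-\|u_p-\hat u\|^2+2(u_p-\hat u,\,u_p-u_\ell)$, and then expand $(u_p-\hat u,\,u_p-u_\ell)$ into terms $\omega_j(F_{i_j}(u_j)-y_{i_j}^\dagger,\,G_{i_j}(u_j)(u_p-\hat u))$. The tangential cone bound then produces cross terms $\|F_{i_j}(u_j)-y_{i_j}^\dagger\|\cdot\|F_{i_j}(u_p)-y_{i_j}^\dagger\|$, and the minimum-residual property of $\ell$ gives no control over these: it yields $\|F(u_\ell)-y^\dagger\|\leq\|F(u_p)-y^\dagger\|$, which is the wrong direction, and the resulting factor $\sum_j\|F(u_j)-y^\dagger\|$ of first powers is not finite under mere $\ell^2$-summability of the residuals (Cauchy--Schwarz introduces an unbounded $(n-m)^{1/2}$). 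The entire purpose of introducing $\ell$ is that the inner product must be anchored at $e_\ell=u_\ell-\hat u$, i.e.\ one writes $\|u_p-u_\ell\|^2=\|u_p-\hat u\|^2-\|u_\ell-\hat u\|^2+2(u_\ell-\hat u,\,u_\ell-u_p)$ and tests against $G_{i_j}(u_j)(u_\ell-\hat u)$; then the foreign residual in each cross term is $\|F_{i_j}(u_\ell)-y_{i_j}^\dagger\|\leq\|F(u_\ell)-y^\dagger\|\leq\|F(u_j)-y^\dagger\|$ for every $j$ in the summation range, so each summand is dominated by a constant times $\|F(u_j)-y^\dagger\|^2$ and the tail sums vanish. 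This is exactly the decomposition the paper uses (in expectation, via the identities for $\mathbb{E}[\|e_j-e_\ell\|^2]$ and $\mathbb{E}[\|e_\ell-e_k\|^2]$); with that correction, and with the $M$-term estimates you describe via \ref{lipbdonH}, \ref{mleqcm} and \ref{lamdd}, your pathwise version would go through.
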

\begin{proof}
 
 For a solution $\hat{u}$ to \ref{stcombined}, we define $e_k := u_k - u^{\dagger}$. As indicated in \ref{decfordeltazero}, $\mathbb{E}[\| e_k\|^2]$ is a monotonically decreasing sequence. Being positive, let  it converge to some $\epsilon \geq 0$. Our main target is to demonstrate that the sequence $\{u_k\}_{k\geq 1}$ is indeed a Cauchy sequence. To begin, it is important to note that $\mathbb{E}[(\cdot , \cdot )]$ serves as an inner product. For any $j \geq k$, we choose an index $\ell$ with $j \geq \ell \geq k$ such that
\begin{equation}\label{choiceofl}
    \mathbb{E}[\| y^\dagger - F(u_\ell)\|^2] \leq \mathbb{E}[\| y^\dagger - F(u_i)\|^2] \quad \forall k \leq i \leq j.
\end{equation}
Using inequality $\mathbb{E}[\| e_j - e_k\|^2]^{1/2} \leq \mathbb{E}[\| e_j - e_\ell\|^2]^{1/2} + \mathbb{E}[\| e_\ell - e_k\|^2]|^{1/2}$ and the identities
\begin{align*}
    \mathbb{E}[\| e_j - e_\ell\|^{2}] &= 2\mathbb{E}[( e_\ell - e_j , e_\ell )] + \mathbb{E}[\| e_j\|^{2}] - \mathbb{E}[\| e_\ell\|^{2}], \\
    \mathbb{E}[\| e_\ell - e_k\|^{2}] &= 2\mathbb{E}[( e_\ell - e_k, e_\ell )] + \mathbb{E}[\| e_k\|^{2}] - \mathbb{E}[\| e_\ell\|^{2}],
\end{align*}
it is sufficient to establish that $\mathbb{E}[\| e_j - e_\ell\|^{2}]$ and $\mathbb{E}[\| e_\ell - e_k\|^{2}]$ tend to zero as $k \rightarrow \infty$.
As $k \rightarrow \infty$, the last two terms on the right-hand sides of the preceding two identities tend to $\epsilon - \epsilon = 0$. This is due to the monotone convergence of $\mathbb{E}[\| e_k\|^{2}]$ to $\epsilon$, as  discussed above. Next, we proceed to demonstrate that the term $\mathbb{E}[( e_\ell - e_k, e_\ell )]$ also approaches to zero as $k \rightarrow \infty$. In fact, by virtue of the definition of $u_k$, we have the following
\begin{align*}
    e_\ell - e_k &= \sum_{i=k}^{\ell-1} (e_{i+1} - e_i) \\
    &= \sum_{i=k}^{\ell-1} \omega_i G_{i_i}(u_i)^*(y_{i_i}^\dagger - F_{i_i}(u_i)) + \lambda_iM'_{i_i}(u_i)^*(y_{i_i}^\dagger - M_{i_i}(u_i)).
\end{align*}
By utilizing   the triangle inequality and the Cauchy-Schwarz inequality, we can write
$$
  \ |\mathbb{E}[( e_l - e_k, e_l )]|  \leq \sum_{i=k}^{\ell - 1} \left[ \omega_i |\mathbb{E}[( G_{i_i}(u_i)^*\!(y_{i_i}^\dagger - F_{i_i}(u_i)), e_l) ]| + \lambda_i|\mathbb{E}( M'_{i_i}(u_i)^*\!(y_{i_i}^\dagger - M_{i_i}(u_i)) , e_l )]| \right]   $$
 \begin{equation}\label{neww}
   = H_F + H_M.\hspace{80mm}
\end{equation}
To this end, we separately estimate  both $H_F$ and $H_M$. Firstly, we note that 
  \begin{align*}
    H_F &= \sum_{i=k}^{\ell - 1} \omega_i |\mathbb{E}[( y_{i_i}^\dagger - F_{i_i}(u_i), G_{i_i}(u_i)(u^{\dagger} - u_i + u_i - u_\ell))]| \\
    &= \sum_{i=k}^{\ell - 1} \omega_i |\mathbb{E}[( y^\dagger - F(u_i), G(u_i)(u^{\dagger} - u_i + u_i - u_\ell))]| \\
    &\leq \sum_{i=k}^{\ell - 1} \omega_i\mathbb{E}[\| y^\dagger - F(u_i)\|^2]^{1/2} \mathbb{E}[\| G(u_i)(u^{\dagger} - u_i)\|^2]^{1/2} \\
    &\quad + \sum_{i=k}^{\ell - 1} \omega_i\mathbb{E}[\| y^\dagger - F(u_i)\|^2]^{1/2} \mathbb{E}[\| G(u_i)(u_i - u_\ell)\|^2]^{1/2} = H_{F_1} + H_{F_2}.
\end{align*}
By using the  tangential cone condition \ref{tangential cone}, we bound the first term $H_{F_1}$  by
\begin{align*}
   H_{F_1}&\leq (1 + \mu ) \sum_{i=k}^{\ell - 1} \omega_i\mathbb{E}[\| y^\dagger - F(u_i)\|^2]^{1/2} \mathbb{E}[\| F(u^{\dagger}) - F(u_i)\|^2]^{1/2} \\
    &= (1 + \mu )\Omega \sum_{i=k}^{\ell - 1} \mathbb{E}[\| y^\dagger - F(u_i)\|^2].
\end{align*}
Likewise, we bound the term $H_{F_2}$ by the triangle inequality and the choice of \(\ell\) in \ref{choiceofl} as
\begin{align*}
    H_{F_2} &\leq (1 + \mu ) \sum_{i=k}^{\ell - 1} \omega_i\mathbb{E}[\| y^\dagger - F(u_i)\|^2]^{1/2} \mathbb{E}[\| (F(u_\ell) - y^\dagger) + (y^\dagger - F(u_i))\|^2]^{1/2} \\
    &\leq 2(1 + \mu)\Omega \sum_{i=k}^{\ell - 1} \mathbb{E}[\| y^\dagger - F(u_i)\|^2].
\end{align*}
By substituting the values of $H_{F_1}$ and $H_{F_2}$ in $H_F$, we obtain that 
\begin{equation}\label{h_f}
 H_F\leq 3(1 + \mu)\Omega \sum_{i=k}^{\ell - 1} \mathbb{E}[\| y^\dagger - F(u_i)\|^2].
\end{equation}
Similar to $H_F$, we estimate $H_M$  as
\begin{align*}
    H_M & = \sum_{i=k}^{\ell - 1} \lambda_i|\mathbb{E}( M'_{i_i}(u_i)^*(y_{i_i}^\dagger - M_{i_i}(u_i)) , e_l )]| \\
    &= \sum_{i=k}^{\ell - 1} \lambda_i|\mathbb{E}(\!(y_{i_i}^\dagger - M_{i_i}(u_i) , M'_{i_i}(u_i)(u^{\dagger}- u_l ))]| \\   
    & \leq  \sum_{i=k}^{\ell - 1} \lambda_i\mathbb{E}[\|(y_{i_i}^\dagger - M_{i_i}(u_i)\|^2]^{\frac{1}{2}} \mathbb{E}[\| M'_{i_i}(u_i)(u^{\dagger}- u_l )\|^2]^{\frac{1}{2}}\\  
     & \leq  \sum_{i=k}^{\ell - 1} \lambda_i\mathbb{E}[\|(y^\dagger - M(u_i)\|^2]^{\frac{1}{2}} \mathbb{E}[\| M'(u_i)(u^{\dagger}- u_l )\|^2]^{\frac{1}{2}}. \end{align*}
This along with \ref{mleqcm}, \ref{lipbdonH}     and the result that  $u_0\in \overline{\mathcal{B}}_U(u^{\dagger}, \sigma)$ along with \ref{decfordeltazero} and \ref{lamdd} provide that 
      $$  H_M   \leq \sigma L_M C_{M}^0   \sum_{i=k}^{\ell - 1} \lambda_i = \sigma L_M C_{M}^0 C_{\lambda}^0    \sum_{i=k}^{\ell - 1}\mathbb{E}[\| y^\dagger - F(u_i)\|^2].$$
Plugging the last estimate and \ref{h_f} in \ref{neww} to attain that 
$$|\mathbb{E}[( e_\ell - e_k, e_\ell )]| \leq (3(1 + \mu )\Omega + \sigma L_M C_{M}^0 C_{\lambda}^0)\sum_{i=k}^{\ell - 1} \mathbb{E}[\| y^\dagger - F(u_i)\|^2].$$
Similarly, one can deduce that 
$$ |\mathbb{E}[( e_j - e_\ell, e_\ell )]| \leq (3(1 + \mu )\Omega + \sigma L_M C_{M}^0 C_{\lambda}^0) \sum_{i=l}^{j - 1} \mathbb{E}[\| y^\dagger - F(u_i)\|^2].$$
These two evaluations along with Corollary \ref{decandsum} lead us to conclude that $\mathbb{E}[\| e_j - e_\ell\|^{2}]$ and $ \mathbb{E}[\| e_\ell - e_k\|^{2}]$ tend to zero as \(k \rightarrow \infty\). Consequently, both the sequences \(\{ e_k\}_{k\geq 1}\) and \(\{ u_k\}_{k\geq 1}\) are almost surely Cauchy sequences. This completes the proof.
\end{proof}
In the following theorem, we show the convergence of   non-noisy iterates.
\begin{thm}
     Under the assumptions of Proposition $\ref{decprop}$, the SDBLI method  $\ref{stmain}$ corresponding to $\delta =0$ generates a sequence of iterates that converges almost surely to an element of $\mathcal{D}(u^{\dagger}, \sigma)$ in $U,$ i.e.,
\[
\lim_{k\rightarrow \infty} \mathbb{E} [\| u_k - \hat{u} \|^{2}] = 0, \hspace{5mm} \hat{u} \in \mathcal{D}(u^{\dagger}, \sigma )\ \text{almost surely}.
\]

\end{thm}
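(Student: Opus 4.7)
The two tools already in hand do most of the heavy lifting: \autoref{ukcauchy} gives that $\{u_k\}_{k\geq 1}$ is almost surely a Cauchy sequence in $U$, and \autoref{decandsum} gives the summability $\sum_{k=1}^{\infty}\mathbb{E}[\|F(u_k)-y^{\dagger}\|^2]<\infty$, which in particular forces $\mathbb{E}[\|F(u_k)-y^{\dagger}\|^2]\to 0$. The first step is therefore to invoke completeness of $U$ to extract an $U$-valued random variable $\hat{u}$ such that $u_k\to\hat{u}$ almost surely as $k\to\infty$.

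Next I would verify that this almost-sure limit lies in $\mathcal{D}(u^{\dagger},\sigma)$. The containment $\hat{u}\in\overline{\mathcal{B}}_U(u^{\dagger},\sigma)$ almost surely is immediate from \autoref{decprop}, since every iterate sits in this closed ball almost surely and the ball is closed under strong convergence. To identify $\hat{u}$ as a solution of \autoref{stcombined}, I would combine the almost-sure convergence $u_k\to\hat{u}$ with assumption $(A1)$: complete continuity (in particular, strong continuity) of each $F_i$ on the strongly convergent sequence $u_k$ gives $F(u_k)\to F(\hat{u})$ almost surely. An application of Fatou's lemma to the nonnegative random variables $\|F(u_k)-y^{\dagger}\|^2$ then yields
\[
\mathbb{E}[\|F(\hat{u})-y^{\dagger}\|^2]\leq \liminf_{k\to\infty}\mathbb{E}[\|F(u_k)-y^{\dagger}\|^2]=0,
\]
so that $F(\hat{u})=y^{\dagger}$ almost surely, i.e.\ $\hat{u}\in\mathcal{D}(u^{\dagger},\sigma)$ almost surely. (Alternatively, I could extract a subsequence along which $\|F(u_k)-y^{\dagger}\|\to 0$ almost surely from $L^2$-convergence, and pass to the limit using continuity of $F$.)

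Finally, to upgrade the almost-sure convergence $u_k\to\hat{u}$ to mean-square convergence, I would use a uniform-integrability argument: both $u_k$ and $\hat{u}$ belong to $\overline{\mathcal{B}}_U(u^{\dagger},\sigma)$ almost surely, hence $\|u_k-\hat{u}\|^2\leq 4\sigma^2$ almost surely. The dominated convergence theorem then delivers $\mathbb{E}[\|u_k-\hat{u}\|^2]\to 0$, completing the proof.

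The step I expect to demand the most care is the identification $F(\hat{u})=y^{\dagger}$: the two convergence modes in play are different (the Cauchy argument gives almost-sure $u_k\to\hat{u}$, while the residual control is only in expectation), so one has to be deliberate about when to pass to subsequences versus when to apply Fatou. The complete continuity in $(A1)$ is exactly the hypothesis that makes this bridge work cleanly, and the bounded orbit from \autoref{decprop} is what makes dominated convergence applicable at the end.
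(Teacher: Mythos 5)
Your proposal is correct and follows essentially the same route as the paper: Cauchy property from Lemma \ref{ukcauchy} to obtain an almost-sure limit, summability from Corollary \ref{decandsum} to kill the residual, complete continuity from (A1) to identify $F(\hat{u})=y^{\dagger}$, and Proposition \ref{decprop} to keep the limit in $\overline{\mathcal{B}}_U(u^{\dagger},\sigma)$. If anything you are more careful than the paper, which passes from $\mathbb{E}[\|F(u_k)-y^{\dagger}\|^2]\to 0$ and almost-sure convergence of $F(u_k)$ directly to $F(\overline{u})=y^{\dagger}$ without the Fatou (or subsequence) step, and does not spell out the dominated-convergence upgrade to the mean-square statement $\lim_{k\to\infty}\mathbb{E}[\|u_k-\hat{u}\|^2]=0$; both of your additions are exactly the right patches.
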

\begin{proof}

 From Lemma \ref{ukcauchy}, it is clear that the generated sequence   $\{u_k\}_{k\in \mathbb{N}}$     is a Cauchy sequence. Therefore,  it has a limit and we denote it by $\overline{u}$ that belongs to $U$ almost surely. As $F$ is completely continuous, we must have 
\[{F}(u_k) \rightarrow {F}(\overline{u}) \hspace{5mm} \text{as} \hspace{5mm} k \rightarrow \infty.\]
In addition, we know from \ref{stsumforzero} that 
\[
\lim_{k\rightarrow \infty} \mathbb{E} [\|  {F}(u_k) - y^{\dagger} \|^{2}] = 0,
\]
and hence $y^{\dagger} =  {F}(\overline{u}).$ Using Proposition \ref{decprop}, we know that  $u_k \in \overline{\mathcal{B}}_U(u^\dagger, \sigma)$ for all $k \geq 0$ almost surely, therefore,  it can be concluded that $\overline{u}$ also belongs to $\overline{\mathcal{B}}_U(u^\dagger, \sigma)$ almost surely. Consequently, $\overline{u}$ is within the set $\mathcal{D}(u^\dagger, \sigma)$ almost surely, and this finishes the proof.
\end{proof}


\subsection{Regularization property}\label{noisy case}
In this section, our focus is on examining the regularizing nature of the method as the parameter $\delta = (\delta_0, \delta_1, \cdot \cdot \cdot, \delta_{P-1})$ approaches to zero. To keep the things straightforward, we select any positive value $\delta_n = (\delta_{0, n}, \delta_{1, n}, \cdot \cdot \cdot, \delta_{P-1, n})$ and its corresponding noisy data $y^{\delta_n}$ within the set $\mathcal{B}(y^{\dagger}, \delta_n)$. We introduce two variables, namely $K_n = k(\delta_n)$ and $u_n = u_{K_n}^{\delta_n}$.
\begin{prop}
    Assume that all hypotheses of Proposition $\ref{decprop}$ hold and  assumption $(A1)$ is fulfilled. Let, for any uniformly drawn index $i_k \in \{0, 1, \cdot \cdot P-1 \},$ $\{\delta_{i_k, n} \}_{n \in \mathbb{N}}$ be a positive zero sequence. Then, any subsequence of $\{u_n \}_{n\in \mathbb{N}}$ contains a further subsequence that converges weakly almost surely to some $\Bar{u} \in \mathcal{D}(u^{\dagger},\sigma)$ in U. In addition, if $u^{\dagger}$ is the unique solution of $\ref{ststart}$ in $\overline{\mathcal{B}}_U(u^{\dagger}, \sigma)$, then $\{u_n \}_{n \in \mathbb{N}}$ converges weakly to $u^{\dagger}$ almost surely in U.
\end{prop}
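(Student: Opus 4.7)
The plan is to combine the weak sequential compactness of bounded sets in the Hilbert space $U$ with the complete-continuity hypothesis (A1) to show that every subsequence of $\{u_n\}$ has a further weakly convergent subsequence whose limit solves \ref{stcombined}.

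To begin, I apply Proposition \ref{decprop} with $\hat{u} = u^{\dagger}$ to conclude that $u_n = u_{K_n}^{\delta_n}$ lies almost surely in $\overline{\mathcal{B}}_U(u^{\dagger},\sigma)$ for every $n$, so $\{u_n\}$ is almost surely norm-bounded in $U$. From any subsequence I extract, by weak compactness, a weakly convergent sub-subsequence $u_{n_{m_j}} \rightharpoonup \bar{u}$ almost surely. Since $\overline{\mathcal{B}}_U(u^{\dagger},\sigma)$ is convex and norm-closed, it is weakly closed, so $\bar{u} \in \overline{\mathcal{B}}_U(u^{\dagger},\sigma)$ almost surely. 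Assumption (A1) upgrades weak to norm convergence under $F$, so $F(u_{n_{m_j}}) \to F(\bar{u})$ strongly in $Y^P$ almost surely.

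The crux is identifying $F(\bar{u}) = y^{\dagger}$. Telescoping the one-step decrease \ref{3.9} with $\hat{u} = u^{\dagger}$ from $k = 0$ to $k = K_n - 1$ yields a uniform bound
\[
\sum_{k=0}^{K_n - 1} \mathbb{E}\!\left[\|F(u_k^{\delta_n}) - y^{\delta_n}\|^2\right] \le C \|u_0 - u^{\dagger}\|^2,
\]
with $C$ independent of $n$ (coming from the bracketed coefficient in \ref{3.9}). Coupled with the a-priori rule \ref{aprioriforSGD} (enforcing $K_n \to \infty$ and $\delta_n^2 \sum_{i \le K_n} \omega_i \to 0$), the step-size rule \ref{new} (forcing $\|F_{i_k}(u_k^{\delta_n}) - y_{i_k}^{\delta_n}\| \le \tau \delta_{i_k,n}$ on the null-step indices), and the triangle inequality $\|F(u_n) - y^{\dagger}\| \le \|F(u_n) - y^{\delta_n}\| + \delta_n$, I extract a further subsequence along which $\mathbb{E}[\|F(u_n) - y^{\dagger}\|^2] \to 0$. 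Passing to a pointwise almost surely subsequence then gives $F(u_{n_{m_j}}) \to y^{\dagger}$, and comparison with the strong limit $F(\bar{u})$ forces $F(\bar{u}) = y^{\dagger}$, so $\bar{u} \in \mathcal{D}(u^{\dagger},\sigma)$ almost surely.

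For the second assertion, uniqueness of $u^{\dagger}$ in $\overline{\mathcal{B}}_U(u^{\dagger},\sigma)$ pins every weak cluster point of $\{u_n\}$ to $u^{\dagger}$, and the classical subsequence-of-subsequences argument then yields $u_n \rightharpoonup u^{\dagger}$ almost surely. I expect the main obstacle to be the extraction in the preceding paragraph: the telescoped sum controls only the \emph{average} mean-square residual over $k \le K_n$, not the residual at the terminal index $K_n$ itself. Reconciling these requires separating the null-step indices (where \ref{new} makes the residual $O(\delta_{i_k,n})$ by design) from the active indices, and invoking the a-priori condition \ref{aprioriforSGD} together with $\delta_n \to 0$ to obtain the terminal residual decay along a suitable subsequence.
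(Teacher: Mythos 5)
Your overall skeleton---monotonicity from Proposition \ref{decprop} giving almost sure boundedness in $\overline{\mathcal{B}}_U(u^{\dagger},\sigma)$, weak sequential compactness, weak closedness of the closed ball, complete continuity (A1) upgrading weak convergence of $u_{n_{m_j}}$ to strong convergence of $F(u_{n_{m_j}})$, and the subsequence-of-subsequences argument for the uniqueness claim---is exactly the argument the paper delegates to \cite[Proposition 2.4]{clason2019bouligand}; the paper gives no proof of its own. The essential difference is that in the cited source the iteration is stopped by the discrepancy principle, so the terminal residual satisfies $\|F(u_{k_*}^{\delta})-y^{\delta}\|\leq\tau\delta$ by construction and the identification $F(\bar{u})=y^{\dagger}$ is a one-line triangle inequality. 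Here the stopping index is chosen a priori, and that bound is not available.

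This is precisely where your proof has a genuine gap, and you name it yourself: telescoping \ref{3.9} yields $\sum_{k=0}^{K_n-1}\mathbb{E}\left[\|F(u_k^{\delta_n})-y^{\delta_n}\|^2\right]\leq C\|u_0-u^{\dagger}\|^2$ uniformly in $n$, which (since $K_n\to\infty$) forces the residual to be small at \emph{some} index $k_n\leq K_n$, but says nothing about the index $K_n$ itself, because the residual is not monotone along the iteration. The repair you sketch does not close this: the rule \ref{new} controls only the single randomly drawn component $\|F_{i_{K_n}}(u_{K_n}^{\delta_n})-y_{i_{K_n}}^{\delta_n}\|$, not the full vector residual $\|F(u_{K_n}^{\delta_n})-y^{\delta_n}\|$, and on active steps the residual is bounded \emph{below}, not above, by $\tau\delta_{i_k,n}$; likewise the a-priori condition \ref{aprioriforSGD} constrains $\delta_n^2\sum_{i\leq K_n}\omega_i$ and gives no pointwise control at $k=K_n$. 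The paper's actual route to convergence at the a-priori stopping index is the asymptotic-stability machinery of Section \ref{noisy case}, which compares $u_k^{\delta_{n_j}}$ with a limiting noise-free trajectory $\hat{u}_k$ and never needs the terminal residual to vanish. To make your argument self-contained you would either have to import that machinery, or add a hypothesis (for instance a discrepancy-type stopping rule) that yields $\mathbb{E}[\|F(u_{K_n}^{\delta_n})-y^{\delta_n}\|^2]\to 0$ directly.
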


\begin{proof}
   See \cite[Proposition 2.4]{clason2019bouligand} for the proof.
\end{proof}
Next, we will show that the SDBLI method \ref{stmain} exhibits asymptotic stability discussed in Definition \ref{def-AS}. To begin, we establish several essential lemmas that will contribute to our main result. Specifically, the following lemma contributes to show (i) of Definition \ref{def-AS}.
\begin{lem}\label{new form}
   Assume that assumptions $(A1)$-$(A5)$ hold. For an arbitrarily chosen starting point $u_0 \in \overline{\mathcal{B}}_U(u^{\dagger}, \sigma)$,  step size $\omega_k \in [\omega, \Omega]$ and uniformly drawn index $i_k \in \{0, 1, \cdot \cdot P-1 \},$ let  $\{\delta_n \}_{n \in \mathbb{N}}$ be a positive zero sequence. Then there exist a subsequence $\{\delta_{n_j}\}_{j \in \mathbb{N}}$ and a sequence $\{\hat{u}_k\}_{k\in \mathbb{N}} \subset \overline{\mathcal{B}}_U(u^{\dagger}, \sigma)$ almost surely such that
   \begin{equation}
       u_{k}^{\delta_{n_j}} \rightarrow \hat{u}_k \hspace{5mm}  \textit{as} \hspace{2mm} j \rightarrow \infty,
   \end{equation}
holds. 
     Moreover,    the sequence $\{\hat{u}_k\}_{k\in \mathbb{N}}$ satisfies
    \begin{equation}\label{rk,sk}
        \hat{u}_{k+1} = \hat{u}_{k} + \omega_{k}G_{i_k}(\hat{u}_k)^{*}(y^{\dagger}_{i_k} - F_{i_k}(\hat{u}_k)) + \lambda_{k}M'_{i_k}(\hat{u}_k)^{*}(y^{\dagger}_{i_k} - M_{i_k}(\hat{u}_k))
        +\omega_{k}r_{i_k,k} + \lambda_{k}s_{i_k,k},
    \end{equation}
       $ \hspace{17mm}\hat{u}_0 = u_0,$ \\
    for some $r_{i_k, k} \in Z_1$ and $s_{i_k, k} \in Z_2$ for all $k \geq 0.$
\end{lem}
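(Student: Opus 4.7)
The plan is to induct on $k$ and, at each step, extract a further subsequence of $\{\delta_{n_j}\}$; a Cantor diagonal at the end produces one subsequence that works for all $k \geq 0$ simultaneously. The defining difficulty is that $u \mapsto G_i(u)$ is not continuous, so strong $U$-convergence of the iterates does not transfer automatically through $G_{i_k}(\cdot)^*$. The compact embedding $Z_1 \hookrightarrow U$ provided by (A5) is the substitute: it gives a subsequential $U$-limit of the $G$-term, and the discrepancy between this subsequential limit and the ``naive'' candidate $G_{i_k}(\hat{u}_k)^*(F_{i_k}(\hat{u}_k)-y_{i_k}^{\dagger})$ is exactly what the remainder $r_{i_k,k}$ is designed to absorb.

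For $k=0$ nothing is to do: $u_0^{\delta_n} = u_0$ is deterministic, so $\hat{u}_0 := u_0$. For the inductive step, assume (after passing to a subsequence still denoted $\{n_j\}$) that $u_k^{\delta_{n_j}} \to \hat{u}_k$ strongly in $U$ with $\hat{u}_k \in \overline{\mathcal{B}}_U(u^{\dagger},\sigma)$ almost surely. The data-driven ($M$) term in \ref{stmain} is easy: by (A2) the operator $M_{i_k}$ is continuous and $u \mapsto M'_{i_k}(u)$ is continuous, and $y_{i_k}^{\delta_{n_j}} \to y_{i_k}^{\dagger}$, so we may pass to the limit directly and set $s_{i_k,k} := 0 \in Z_2$. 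For the $G$-term, (A5) places
\[ v_j := G_{i_k}(u_k^{\delta_{n_j}})^*\bigl(F_{i_k}(u_k^{\delta_{n_j}}) - y_{i_k}^{\delta_{n_j}}\bigr) \in Z_1 \]
with $Z_1$-norm bounded uniformly in $j$ by $\hat{L}_F \sup_j \|F_{i_k}(u_k^{\delta_{n_j}}) - y_{i_k}^{\delta_{n_j}}\|$, a quantity that is finite by (A1) and the noise assumption. The compact embedding $Z_1 \hookrightarrow U$ then yields a further sub-subsequence along which $v_j \to g_k$ strongly in $U$, while weak compactness of bounded sets in $Z_1$ places $g_k$ itself in $Z_1$.

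Define
\[ \hat{u}_{k+1} := \hat{u}_k - \omega_k g_k - \lambda_k M'_{i_k}(\hat{u}_k)^{*}\bigl(M_{i_k}(\hat{u}_k) - y_{i_k}^{\dagger}\bigr), \qquad r_{i_k,k} := G_{i_k}(\hat{u}_k)^{*}\bigl(F_{i_k}(\hat{u}_k) - y_{i_k}^{\dagger}\bigr) - g_k. \]
Both summands of $r_{i_k,k}$ lie in $Z_1$ (the first by the range inclusion in (A5), the second by construction), so $r_{i_k,k} \in Z_1$. A direct rearrangement of these definitions produces exactly \ref{rk,sk}. The ball membership $\hat{u}_{k+1} \in \overline{\mathcal{B}}_U(u^{\dagger},\sigma)$ almost surely is automatic from Proposition \ref{decprop} applied to the prelimit iterates and the closedness of the ball under strong $U$-limits. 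Diagonalizing over $k$ then gives one subsequence valid for every index.

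The principal obstacle is clearly the handling of the $G$-term: without (A5) one has no grounds even to extract a $U$-convergent subsequence of $v_j$, and even granted such a limit the lack of continuity of $u \mapsto G_i(u)$ prevents identifying it with $G_{i_k}(\hat{u}_k)^{*}(F_{i_k}(\hat{u}_k) - y_{i_k}^{\dagger})$. Introducing the $Z_1$-valued correction $r_{i_k,k}$ is precisely the bookkeeping device that converts this failure of continuity into a workable limit recurrence, and this is the step on which the later asymptotic-stability argument will rest.
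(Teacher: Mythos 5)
Your proposal is correct and follows essentially the same route as the paper: induction on $k$ with subsequence extraction (diagonalized over $k$), using the $Z_1$-boundedness from (A5) and the compact embedding $Z_1\hookrightarrow U$ to obtain a strong $U$-limit of the $G$-term, and defining $r_{i_k,k}$ as the discrepancy between that limit and the naive candidate $G_{i_k}(\hat{u}_k)^{*}(F_{i_k}(\hat{u}_k)-y_{i_k}^{\dagger})$. The only cosmetic differences are that you take $s_{i_k,k}=0$ by exploiting the continuity (indeed constancy, since $M_i$ is linear) of $M_i'$, where the paper runs the same compactness argument for the $M$-term, and that you extract the limit of the full $G$-term at once rather than first splitting off the vanishing piece $c_{i_k,k}^{j}$ as the paper does --- a splitting that the paper later reuses to get the quantitative bound on $\|r_{i_k,k}\|$ in its Lemma 3.
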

\begin{proof}
   Clearly, the sequence $\{{K_n}\}_{n \in \mathbb{N}}$ consists of natural numbers.  Therefore, there exists a subsequence ${\delta_{n_j}}$ for which $K_{n_j}$   increases to infinity as $j$ approaches infinity due to \ref{aprioriforSGD}.\\ 
To show the claim made in the lemma, we employ mathematical induction.  To streamline our notation, we define
    \begin{center}
         $u_{k}^{j} = u_{k}^{\delta_{n_j}}, y_{i_k, k}^{j} = F_{i_k}(u_{k}^{\delta_{n_j}}), z_{i_k,k}^{j} = M_{i_k}(u_{k}^{\delta_{n_j}})$ and $y_{i_k}^j = y_{i_k}^{\delta_{n_j}}.$ 
    \end{center}
       For $k=0$ with $\hat{u}_0 = u_0$,  \ref{(i)} is satisfied  trivially. Without loss of generality,  we assume that $\{\delta_{n_j}\}_{j \in \mathbb{N}}$ itself is a subsequence satisfying $u_{k}^{j} \rightarrow\hat{u}_k$ as $j \rightarrow \infty$ for some $\hat{u}_k \in \overline{\mathcal{B}}_U(u^{\dagger}, \sigma).$ By setting
    $$a_{i_k,k}^j = G_{i_k}(u_{k}^j)^*(y_{i_k}^j - y_{i_k,k}^{j}), \hspace{5mm} a_{i_k,k} = G_{i_k}(u_{k})^*(y_{i_k}^{\dagger} - \hat{y}_{i_k,k}), \hspace{5mm} \zeta_{i_k,k}^j = a_{i_k,k}^j - a_{i_k,k}, \hspace{5mm} $$
    and
    $$b_{i_k,k}^j = M'_{i_k}(u_{k}^j)^*(y_{i_k}^j - z_{i_k,k}^{j}), \hspace{5mm} b_{i_k,k} = M'_{i_k}(u_{k})^*(y_{i_k}^{\dagger} - \hat{z}_{i_k,k}), \hspace{5mm} \xi_{i_k,k}^j = b_{i_k,k}^j - b_{i_k,k}, \hspace{5mm} $$
    with $\hat{y}_{i_k,k} = F_{i_k}(\hat{u}_{k}), \hat{z}_{i_k,k} = M_{i_k}(\hat{u}_{k})$, we have that 
    \begin{align*}
        \zeta_{i_k,k}^{j} &=  G_{i_k}(u_{k}^j)^*(y_{i_k}^j - y_{i_k,k}^{j}) - G_{i_k}(u_{k})^*(y_{i_k}^{\dagger} - \hat{y}_{i_k,k}) \\
        & =\left[ G_{i_k}(u_{k}^j)^*( y_{i_k}^{\dagger} - \hat{y}_{i_k,k} ) - G_{i_k}(u_{k})^*(y_{i_k}^{\dagger} - \hat{y}_{i_k,k}) \right] + G_{i_k}(u_{k}^j)^*(y_{i_k}^j - y_{i_k}^{\dagger} + \hat{y}_{i_k,k}- y_{i_k,k}^{j}) \\
        &= \eta_{i_k,k}^{j} - \eta_{i_k,k} + c_{i_k,k}^j,
    \end{align*}
    where
    $\eta_{i_k,k}^{j} = G_{i_k}(u_{k}^j)^*( y_{i_k}^{\dagger} - \hat{y}_{i_k,k} ), \ \  \eta_{i_k,k} =  G_{i_k}(u_{k})^*(y_{i_k}^{\dagger} - \hat{y}_{i_k,k})\ \ \text{and}  \ \    c_{i_k,k}^j =  G_{i_k}(u_{k}^j)^*(y_{i_k}^j - y_{i_k}^{\dagger} + \hat{y}_{i_k,k}- y_{i_k,k}^{j}) .$\\
    Similarly, we note that
     \begin{align*}
        \xi_{i_k,k}^{j} &=  M'_{i_k}(u_{k}^j)^*(y_{i_k}^j - z_{i_k,k}^{j}) - M'_{i_k}(u_{k})^*(y_{i_k}^{\dagger} - \hat{z}_{i_k,k}) \\
        & =\left[ M'_{i_k}(u_{k}^j)^*( y_{i_k}^{\dagger} - \hat{z}_{i_k,k} ) - M'_{i_k}(u_{k})^*(y_{i_k}^{\dagger} - \hat{z}_{i_k,k}) \right] + M_{i_k}(u_{k}^j)^*(y_{i_k}^j - y_{i_k}^{\dagger} + \hat{z}_{i_k,k}- z_{i_k,k}^{j}) \\
        &= \upsilon_{i_k,k}^{j} - \upsilon_{i_k,k} + d_{i_k,k}^j,
    \end{align*}
    where
    $\upsilon_{i_k,k}^{j} = M'_{i_k}(u_{k}^j)^*( y_{i_k}^{\dagger} - \hat{z}_{i_k,k} ), \ \ \upsilon_{i_k,k} =  M'_{i_k}(u_{k})^*(y_{i_k}^{\dagger} - \hat{z}_{i_k,k})\ \ \text{and}\ \   d_{i_k,k}^j =  M'_{i_k}{u_{k}^j}^*(y_{i_k}^j - y_{i_k}^{\dagger} + \hat{z}_{i_k,k}- z_{i_k,k}^{j}) .$
    To this end, assumption $(A1)$,  boundedness of $\{M_i\}_{i \in \{0,1,2 ... P-1\}}$ together with the fact that $u_{k}^{j} \rightarrow \hat{u}_{k}$ imply that $y_{i_k,k}^{j} \rightarrow \hat{y}_{i_k,k}$ and $z_{i_k,k}^{j} \rightarrow \hat{z}_{i_k,k}$. From this and assumption $(A2)$, we further deduce that
    \begin{equation*}\label{ex1}
        c_{i_k,k}^{j} \hspace{2mm} \textit{and} \hspace{2mm} d_{i_k,k}^{j} \rightarrow 0 \in U \textit{as} \hspace{2mm} j \rightarrow \infty.
    \end{equation*}
       Using  assumption $(A5)$, we further see that the sequences $\{\eta_{i_k,k}^j\}_{j \in \mathbb{N}},\{\upsilon_{i_k,k}^j\}_{j \in \mathbb{N}} $ and hence  $\{\eta_{i_k,k}^j - \eta_{i_k,k}\}_{j \in \mathbb{N}} ,\{\upsilon_{i_k,k}^j -\upsilon_{i_k,k}\}_{j \in \mathbb{N}} $  are bounded in $Z_1, Z_2,$ respectively. Since $Z_1, Z_2 \hookrightarrow U$ compactly, there exist $r_{i_k,k} \in Z_1$ and $s_{i_k,k} \in Z_2$ and a subsequence of $\{\delta_{n_j}\}_{j \in \mathbb{N}}$, denoted in the same way, such that
   \begin{equation}\label{rn}
       \eta_{i_k,k}^j - \eta_{i_k,k} \rightarrow r_{i_k,k} \hspace{5mm} \text{and} \hspace{5mm} \upsilon_{i_k,k}^j -\upsilon_{i_k,k} \rightarrow s_{i_k,k} \in U \textit{as} \hspace{2mm} j \rightarrow \infty.
   \end{equation}
  By definition, we observe that
   \begin{align*}
       u_{k+1}^j &= u_{k}^j + \omega_{k}G_{i_k}(u_{k}^j)^*(y_{i_k}^j - y_{i_k,k}^j) + \lambda_{k}M'_{i_k}(u_{k}^j)^*(y_{i_k}^j - z_{i_k,k}^j) \\
       &= u_{k}^j +\omega_{k}a_{i_k,k}^j + \lambda_{k}b_{i_k,k}^j \\
       &= u_{k}^{j} + \omega_{k}a_{i_k,k} + \omega_k(\eta_{i_k,k}^{j} - \eta_{i_k,k}) + \omega_{k}c_{i_k,k}^j +
        \lambda_{k}b_{i_k,k} + \lambda_k(\upsilon_{i_k,k}^{j} - \upsilon_{i_k,k}) + \lambda_{k}d_{i_k,k}^j.
   \end{align*}
   Taking limit $j \rightarrow \infty$, and the fact that $u_{k}^{j} \rightarrow \hat{u}_k$ along with \ref{rn} to reach at
   \begin{align*}
       u_{k+1}^j &\rightarrow \hat{u}_k + \omega_{k}a_{i_k,k} + \omega_{k}r_{i_k,k} + \lambda_{k}b_{i_k,k} + \lambda_{k}s_{i_k,k} \\
      &= \hat{u}_k + \omega_{k}G_{i_k}(u_{k})^*(y_{i_k}^{\dagger} - \hat{y}_{i_k,k}) + \omega_{k}r_{i_k,k} + \lambda_{k} M'_{i_k}(u_{k})^*(y_{i_k}^{\dagger} - \hat{z}_{i_k,k}) + \lambda_{k}s_{i_k,k}.
   \end{align*}
  By setting $\hat{u}_{k+1} = \hat{u}_k + \omega_{k}G_{i_k}(u_{k})^*(y_{i_k}^{\dagger} - \hat{y}_{i_k,k}) + \omega_{k}r_{i_k,k} + \lambda_{k} M'_{i_k}(u_{k})^*(y_{i_k}^{\dagger} - \hat{z}_{i_k,k}) + \lambda_{k}s_{i_k,k}, $ we get  \ref{rk,sk}    for $k+1$. Also, as   $u_{k+1}^j \in \overline{\mathcal{B}}_U(u^{\dagger}, \sigma)$ almost surely for all $j \in \mathbb{N}$, this implies that $\hat{u}_{k+1} \in \overline{\mathcal{B}}_U(u^{\dagger}, \sigma)$ almost surely.   This completes the proof.
\end{proof}
The following lemma will give several estimates that will be incorporated later on in Lemma \ref{big} in showing (ii) of Definition \ref{def-AS}.
\begin{lem}\label{norm}
   Under the conditions of Lemma $\ref{new form}$,   the sequences $\{\hat{u}_k\}_{k \in \mathbb{N}}, \{r_{i_k, k}\}$, and $\{s_{i_k, k}\}$ defined in $\ref{rk,sk}$ adhere to the subsequent estimates.\\
     For all $k \in \mathbb{N}$ and $i_k \in \{0,1,2,... P-1\}$, we have 
    \begin{enumerate}
        \item [(i)]$\|r_{i_k,k}\| \leq 2L_F\|y_{i_k}^{\dagger} - \hat{y}_{i_k,k}\|,$
        \item [(ii)] $\|s_{i_k,k}\| \leq 2L_M C_{M}^0,$
        \item [(iii)]$(r_{i_k,k}, \hat{u}_k - \hat{u}) \leq (-1 + \mu)\|y_{i_k}^{\dagger} - \hat{y}_{i_k,k}\|^{2} - (y^{\dagger} - \hat{y}_k, G_{i_k}(\hat{u}_k)(\hat{u}_k - \hat{u})),$
        \item  [(iv)]$(s_{i_k,k}, \hat{u}_k - \hat{u}) \leq 2L_M C_{M}^0 \sigma ,$ 
        \item [(v)] $|(r_{i_k,k}, \hat{u}_m - \hat{u})| \leq 2(1+\mu )\|y_{i_k}^{\dagger} - \hat{y}_{i_k,k}\|\left[ \|y_{i_k}^{\dagger} - \hat{y}_{i_k,k}\| + \|\hat{y}_{i_k,m} - \hat{y}_{i_k,k}\|\right] \hspace{2mm} \forall \hspace{1mm} m \geq 0.$
    \end{enumerate}
    Here $L_F, L_M$ are from assumption $(A2)$ and $\hat{u} \in \mathcal{D}(u^{\dagger}, \sigma).$
\end{lem}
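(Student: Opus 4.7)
The plan is to work off the identities established in \autoref{new form}: $r_{i_k,k}$ and $s_{i_k,k}$ arise as strong limits in $U$ of the decompositions $\eta^{j}_{i_k,k}-\eta_{i_k,k}$ and $\upsilon^{j}_{i_k,k}-\upsilon_{i_k,k}$, and equivalently of the full differences $\zeta^{j}_{i_k,k}=a^{j}_{i_k,k}-a_{i_k,k}$ and $\xi^{j}_{i_k,k}=b^{j}_{i_k,k}-b_{i_k,k}$, since the remainders $c^{j}_{i_k,k},d^{j}_{i_k,k}$ vanish as $j\to\infty$. With this in hand, every claim reduces to taking a limit $j\to\infty$ of an inner product, using assumption (A2) to obtain norm bounds and assumption (A4) to trade actions of $G_{i_k}$ for differences of $F_{i_k}$-values, which are well-behaved by the complete continuity (A1).

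Items (i) and (ii) are almost immediate. The triangle inequality together with the operator-norm estimates in (A2) yields $\|\eta^{j}_{i_k,k}\|,\|\eta_{i_k,k}\|\le L_F\|y^{\dagger}_{i_k}-\hat{y}_{i_k,k}\|$ and $\|\upsilon^{j}_{i_k,k}\|,\|\upsilon_{i_k,k}\|\le L_M\|y^{\dagger}_{i_k}-\hat{z}_{i_k,k}\|$; passing to the limit (by lower semi-continuity of the norm) and invoking Lemma $1$ with $\delta=0$ to bound the second factor in (ii) by $C_M^{0}$ gives the claimed inequalities.

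For (iii), I would first pass through the adjoint to write $(a^{j}_{i_k,k},\hat{u}_k-\hat{u})=(y^{j}_{i_k}-y^{j}_{i_k,k},G_{i_k}(u_k^{j})(\hat{u}_k-\hat{u}))$, split $\hat{u}_k-\hat{u}=(\hat{u}_k-u_k^{j})+(u_k^{j}-\hat{u})$, and handle the two pieces separately. The first piece is bounded in norm by $L_F\|\hat{u}_k-u_k^{j}\|\cdot\|y^{j}_{i_k}-y^{j}_{i_k,k}\|$ and vanishes as $j\to\infty$. For the second, the tangential cone condition (A4) applied to $(u_k^{j},\hat{u})$ gives $G_{i_k}(u_k^{j})(u_k^{j}-\hat{u})=(y^{j}_{i_k,k}-y^{\dagger}_{i_k})+v^{j}$ with $\|v^{j}\|\le\mu\|y^{j}_{i_k,k}-y^{\dagger}_{i_k}\|$; combining with $y^{j}_{i_k}-y^{j}_{i_k,k}=(y^{j}_{i_k}-y^{\dagger}_{i_k})+(y^{\dagger}_{i_k}-y^{j}_{i_k,k})$ and passing to the limit (using (A1) so that $y^{j}_{i_k,k}\to\hat{y}_{i_k,k}$ and the noise $\|y^{j}_{i_k}-y^{\dagger}_{i_k}\|\le\delta_{i_k,n_j}\to 0$) produces the principal term $-\|y^{\dagger}_{i_k}-\hat{y}_{i_k,k}\|^{2}$ and an error of magnitude at most $\mu\|y^{\dagger}_{i_k}-\hat{y}_{i_k,k}\|^{2}$. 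Subtracting the unchanged contribution $(a_{i_k,k},\hat{u}_k-\hat{u})=(y^{\dagger}_{i_k}-\hat{y}_{i_k,k},G_{i_k}(\hat{u}_k)(\hat{u}_k-\hat{u}))$ yields (iii).

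Estimate (v) follows by the same strategy applied to $(r_{i_k,k},\hat{u}_m-\hat{u})$, now splitting $\hat{u}_m-\hat{u}=(\hat{u}_m-u_k^{j})+(u_k^{j}-\hat{u})$ and invoking (A4) twice, on both pairs $(u_k^{j},\hat{u}_m)$ and $(u_k^{j},\hat{u})$, so as to eliminate the $G_{i_k}$-term entirely; the triangle inequality recombines the two residuals into $\|y^{\dagger}_{i_k}-\hat{y}_{i_k,k}\|+\|\hat{y}_{i_k,m}-\hat{y}_{i_k,k}\|$, while the factor $2(1+\mu)$ arises by summing the $(1+\mu)$ bound just obtained with the analogous $(1+\mu)$ bound on $(a_{i_k,k},\hat{u}_m-\hat{u})$ (treated by applying (A4) to pairs $(\hat{u}_k,\hat{u}_m)$ and $(\hat{u}_k,\hat{u})$). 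Finally, (iv) is a direct Cauchy--Schwarz from (ii) together with $\hat{u}_k,\hat{u}\in\overline{\mathcal{B}}_U(u^{\dagger},\sigma)$. The principal obstacle throughout is the lack of continuity of $u\mapsto G_{i_k}(u)$: one cannot pass $j\to\infty$ inside $G_{i_k}$, so the adjoint rewrite followed by the tangential cone bound is essential, since it converts troublesome $G_{i_k}$-actions into $F_{i_k}$-differences that converge by (A1).
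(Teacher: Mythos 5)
Your proposal is correct and follows essentially the same route as the paper: items (i), (ii) and (iv) are handled exactly as in the paper's own proof (operator-norm bounds from (A2) combined with the limit \ref{rn}, Lemma 1 with $\delta=0$, and Cauchy--Schwarz), while for (iii) and (v) the paper merely cites \cite[Lemma 2.7]{clason2019bouligand}, and your adjoint-rewrite plus tangential-cone argument is precisely the standard proof being cited there, correctly adapted to the present setting. The only cosmetic remark is that in (iv) the triangle inequality actually gives $\|\hat{u}_k-\hat{u}\|\le 2\sigma$ rather than $\sigma$, but the paper's proof makes the identical simplification, so this is not a gap in your argument relative to the paper.
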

\begin{proof}
 From assumption $(A2)$, by employing the same notations   utilized in Lemma \ref{new form}, we deduce that
 \begin{center}
     $\|\eta_{i_k,k}^j - \eta_{i_k,k}\|=\|G_{i_k}(u_{k}^j)^*( y_{i_k}^{\dagger} - \hat{y}_{i_k,k} ) - G_{i_k}(u_{k})^*(y_{i_k}^{\dagger} - \hat{y}_{i_k,k})\| \leq 2L_F\|y_{i_k}^{\dagger} - \hat{y}_{i_k,k}\|,$
     \end{center}
     and
     \begin{center}
      $\|\upsilon_{i_k,k}^j - \upsilon_{i_k,k}\|=\|M'_{i_k}(u_{k}^j)^*( y_{i_k}^{\dagger} - \hat{z}_{i_k,k} ) - M'_{i_k}(u_{k})^*(y_{i_k}^{\dagger} - \hat{z}_{i_k,k})\| \leq 2L_M\|y_{i_k}^{\dagger} - \hat{z}_{i_k,  k}\|.$
 \end{center}
By incorporating \ref{rn}, we attain that  
 $$\|r_{i_k,k}\| = \lim_{j \rightarrow \infty}\|\eta_{i_k,k}^j - \eta_{i_k,k}\|\leq 2L_F\|y_{i_k}^{\dagger} - \hat{y}_{i_k,k}\|,$$ 
 and  
 $$\|s_{i_k,k}\| = \lim_{j \rightarrow \infty}\|\upsilon_{i_k,k}^j - \upsilon_{i_k,k}\|\leq 2L_M\|y_{i_k}^{\dagger} - \hat{z}_{i_k,k}\| = 2L_M C_{M}^{0}.$$
 This proves assertions (i) and (ii). 
 The proofs of assertions (iii) and (v) closely resemble to the one presented in \cite[Lemma 2.7]{clason2019bouligand} and are therefore omitted here. Finally, we prove assertion (iv). By employing Cauchy-Schwartz inequality, we get
 \begin{align*}
     (s_{i_k,k}, \hat{u}_k - \hat{u}) \leq \|s_{i_k,k}\|\|\hat{u}_k - \hat{u}\| = 2L_M C_{M}^0 \sigma.
 \end{align*}
    This concludes the proof.
\end{proof}
\begin{lem}\label{big}
    Suppose that assumption $(A1)$-$(A5)$ hold. Let $\Omega$ and $\omega$ satisfy  $\Omega \geq \omega >0$ and
\begin{equation}\label{stCh}
     C_H = 2\left[\Omega(-1 + \mu + 10\Omega L_{F}^2) + C_{\lambda}L_M C_M^0\left(3\sigma +10C_{\lambda}L_M C_M^0  \right) \right]\leq 0.
     \end{equation}
Moreover, let us assume that there exist a constant $C_{\lambda}$ such that 
\begin{equation}\label{C00}
    \lambda_k \leq C_{\lambda} \|y_{i_k}^{\dagger} - \hat{y}_{i_k,k}\|^2 \hspace{5mm} \forall \hspace{2mm} i_k \in \{0,1,2... P-1\}.
\end{equation}
Assuming the step size $\omega_k \in [\omega, \Omega],$ starting point $ u_0 \in \overline{\mathcal{B}}_U(u^{\dagger}, \sigma)$ is arbitrarily chosen and  the sequence $\{\hat{u}_k\}_{k \in \mathbb{N}}$  is defined by $\ref{rk,sk}$ and it satisfies assertions $(i)$–$(v)$   
  of Lemma $\ref{norm}$. Then $\{\hat{u}_k\}_{k \in \mathbb{N}}$ converges almost surely to some $\Bar{u} \in \mathcal{D}(u^{\dagger}, \sigma)$ as $k  \rightarrow \infty.$ 
\end{lem}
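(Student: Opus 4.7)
The strategy is to treat the recurrence \ref{rk,sk} as a perturbed exact-data SDBLI iteration and replay the arguments of Proposition \ref{decprop}, Corollary \ref{decandsum}, and Lemma \ref{ukcauchy}, but with extra care devoted to the remainder terms $\omega_k r_{i_k,k}$ and $\lambda_k s_{i_k,k}$, whose sizes are controlled by Lemma \ref{norm}. First I would establish the following one-step monotonicity: for any $\hat{u}\in\mathcal{D}(u^{\dagger},\sigma)$,
\begin{equation*}
\mathbb{E}[\|\hat{u}_{k+1}-\hat{u}\|^{2}]-\mathbb{E}[\|\hat{u}_{k}-\hat{u}\|^{2}]\;\leq\; C_H\,\mathbb{E}[\|y^{\dagger}-F(\hat{u}_k)\|^{2}],
\end{equation*}
where $C_H$ is the constant in \ref{stCh}. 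To derive this, I would expand $\|\hat{u}_{k+1}-\hat{u}\|^{2}-\|\hat{u}_{k}-\hat{u}\|^{2}=2(\hat{u}_{k+1}-\hat{u}_k,\hat{u}_k-\hat{u})+\|\hat{u}_{k+1}-\hat{u}_k\|^{2}$ using \ref{rk,sk}. The inner-product contributions from the $G_{i_k}^{*}$ and $M'_{i_k}{}^{*}$ terms are handled exactly as in the proof of Proposition \ref{decprop} with $\delta=0$: assumption (A4) together with (A2) yields a bound of the form $-\omega_k(1-L_F^2\omega_k-\mu)\|y_{i_k}^\dagger-\hat{y}_{i_k,k}\|^{2}$ for the $F$-part, while (A2), \ref{mleqcm}, and \ref{C00} yield a bound of the form $C_\lambda L_M C_M^{0}\sigma\|y_{i_k}^\dagger-\hat{y}_{i_k,k}\|^{2}$ for the $M$-part. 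The new inner-product contributions involving $r_{i_k,k}$ and $s_{i_k,k}$ are absorbed via items (iii) and (iv) of Lemma \ref{norm}, giving respectively $(-1+\mu)\omega_k\|y_{i_k}^\dagger-\hat{y}_{i_k,k}\|^2$ plus a cross term, and $2\lambda_k L_M C_M^{0}\sigma$, which is controlled by \ref{C00}.

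Next I would bound $\|\hat{u}_{k+1}-\hat{u}_k\|^{2}$ using the triangle inequality to split it into four squared norms; each is controlled by $\Omega^2 L_F^2\|y_{i_k}^\dagger-\hat{y}_{i_k,k}\|^{2}$, $(\lambda_k L_M C_M^{0})^{2}$, $\omega_k^{2}\|r_{i_k,k}\|^{2}$, and $\lambda_k^{2}\|s_{i_k,k}\|^{2}$ using (A2), \ref{mleqcm}, and items (i)–(ii) of Lemma \ref{norm}. Plugging \ref{C00} again rewrites every $\lambda_k$ factor as a multiple of $\|y_{i_k}^\dagger-\hat{y}_{i_k,k}\|^{2}$, so after collecting coefficients every term is a multiple of $\|y_{i_k}^\dagger-\hat{y}_{i_k,k}\|^{2}$. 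Conditioning on $\mathcal{F}_k$ converts the $i_k$-indexed residual to $\|y^{\dagger}-F(\hat{u}_k)\|^{2}$, and taking full expectation produces exactly the inequality above with leading constant $C_H$. The hypothesis $C_H\leq 0$ then gives $\mathbb{E}[\|\hat{u}_{k+1}-\hat{u}\|^{2}]\leq\mathbb{E}[\|\hat{u}_k-\hat{u}\|^{2}]$ and, by summing from $0$ to $\infty$ with $\hat{u}=u^{\dagger}$ (taking the strict version of \ref{stCh} or reading $C_H$ as $-\tilde{C}<0$), the summability
\begin{equation*}
\sum_{k=0}^{\infty}\mathbb{E}[\|y^{\dagger}-F(\hat{u}_k)\|^{2}]<\infty,
\end{equation*}
together with $\hat{u}_k\in\overline{\mathcal{B}}_U(u^{\dagger},\sigma)$ almost surely.

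For the Cauchy property I would mirror Lemma \ref{ukcauchy}, setting $\hat{e}_k:=\hat{u}_k-u^{\dagger}$ and, for $j\geq k$, choosing an intermediate $\ell\in[k,j]$ minimising $\mathbb{E}[\|y^{\dagger}-F(\hat{u}_\ell)\|^{2}]$ among those indices. Writing $\hat{e}_\ell-\hat{e}_k$ as a telescoping sum of the increments in \ref{rk,sk} and applying Cauchy–Schwarz, the $G_{i_k}^{*}$-increments paired with $\hat{e}_\ell$ are controlled exactly as in Lemma \ref{ukcauchy} via (A4), the $M'_{i_k}{}^{*}$-increments are controlled via \ref{mleqcm}, \ref{lipbdonH} and \ref{C00}, and the new $\omega_k r_{i_k,k}$ increments are handled by Lemma \ref{norm}(v), which produces terms of the form $\|y^{\dagger}-F(\hat{u}_i)\|(\|y^{\dagger}-F(\hat{u}_i)\|+\|F(\hat{u}_\ell)-F(\hat{u}_i)\|)$ that collapse, after the triangle inequality and the optimality of $\ell$, into multiples of $\|y^{\dagger}-F(\hat{u}_i)\|^{2}$. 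The $\lambda_k s_{i_k,k}$ contribution is even simpler, since Lemma \ref{norm}(ii) together with \ref{C00} bounds it by a multiple of $\|y^{\dagger}-F(\hat{u}_i)\|^{2}$. Summability from the previous step then forces $\mathbb{E}[(\hat{e}_\ell-\hat{e}_k,\hat{e}_\ell)]\to 0$ and, combined with the monotone convergence of $\mathbb{E}[\|\hat{e}_k\|^{2}]$ to some $\epsilon\geq 0$, yields $\mathbb{E}[\|\hat{e}_j-\hat{e}_k\|^{2}]\to 0$, so $\{\hat{u}_k\}$ is almost surely Cauchy.

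Finally I would denote the limit by $\bar{u}$, which lies in $\overline{\mathcal{B}}_U(u^{\dagger},\sigma)$ almost surely. By the complete continuity of $F$ from (A1), $F(\hat{u}_k)\to F(\bar{u})$, while the summability forces $\|y^{\dagger}-F(\hat{u}_k)\|\to 0$ along a subsequence, hence $F(\bar{u})=y^{\dagger}$ and $\bar{u}\in\mathcal{D}(u^{\dagger},\sigma)$. The main obstacle is the bookkeeping in the monotonicity step: the recurrence \ref{rk,sk} carries four contributions per iterate (two from the gradients and two from the remainders) so the expansion of $\|\hat{u}_{k+1}-\hat{u}_k\|^{2}$ produces sixteen cross terms, and it is this cluster of terms whose coefficients must be matched precisely against the definition of $C_H$ in \ref{stCh} using \ref{C00}. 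A secondary subtlety is that the estimates of Lemma \ref{norm}(iii)–(v) depend on the specific solution $\hat{u}$; taking $\hat{u}=u^{\dagger}$ in the monotonicity step and using the dichotomy $\hat{y}_{i_k,k}=y_{i_k}^{\dagger}-[y_{i_k}^{\dagger}-\hat{y}_{i_k,k}]$ is what keeps everything expressed in terms of $\|y^{\dagger}-F(\hat{u}_k)\|^{2}$ alone.
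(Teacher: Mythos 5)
Your proposal is correct and follows essentially the same route as the paper's proof: the same expansion of $\|\hat{u}_{k+1}-u^{\dagger}\|^{2}-\|\hat{u}_{k}-u^{\dagger}\|^{2}$ into an inner-product term and a squared-increment term, the same use of Lemma \ref{norm}(i)--(iv) together with \ref{C00} to collect everything into the coefficient $C_H$, the same summability conclusion, the same Cauchy argument with an intermediate index minimising the expected residual and Lemma \ref{norm}(v) handling the remainder increments, and the same final appeal to complete continuity. Your remark that the summability step really needs the strict version of \ref{stCh} is well taken --- the paper's own inequality \ref{sumsum} divides by $C_H$ and thus implicitly assumes $C_H<0$ --- but this matches rather than departs from the paper's argument.
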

\begin{proof}
   By definition of $\hat{u}_{k+1}$, we can write
    \begin{equation}\label{CSI}
  \|\hat{u}_{k+1} - u^{\dagger}\|^2 -  \|\hat{u}_{k} - u^{\dagger} \|^2 = 2\left(\hat{u}_k - u^{\dagger}, \hat{u}_{k+1} - \hat{u}_{k}\right) + \| \hat{u}_{k+1} - \hat{u}_{k}\|^2. 
\end{equation} Next, using \ref{rk,sk} we  solve right hand side of  \ref{CSI}  in parts. We note that
\begin{align*}
    \left(\hat{u}_k - u^{\dagger}, \hat{u}_{k+1} - \hat{u}_{k}\right) &=  \left(\hat{u}_k - u^{\dagger},\omega_{k}G_{i_k}(\hat{u}_k)^{*}(y_{i_k}^{\dagger} - F_{i_k}(\hat{u}_k)) \right) \\
    &  \hspace{10mm}+ \left(\hat{u_k} - u^{\dagger}, \lambda_{k}M'_{i_k}(\hat{u}_k)^{*}(y_{i_k}^{\dagger} - M_{i_k}(\hat{u}_k))
        +\omega_{k}r_{i_k,k} + \lambda_{k}s_{i_k,k} \right)  \\
        &= \omega_k(G_{i_k}(\hat{u}_k)(\hat{u_k} - u^{\dagger}), (y_{i_k}^{\dagger} - \hat{y}_{i_k,k})) + \omega_k(\hat{u}_k - u^{\dagger}, r_{i_k,k}) \\ & \hspace{10mm} + \lambda_k(M'_{i_k}(\hat{u}_k)(\hat{u}_k - u^{\dagger}), (y_{i_k}^{\dagger} - \hat{z}_{i_k,k})) + \lambda_k(\hat{u}_k - u^{\dagger}, s_{i_k,k}) \\
        &\leq \omega_k(-1 + \mu)\|y_{i_k}^{\dagger} - \hat{y}_{i_k}\|^2 + \sigma\lambda_{k}L_{M}C_{M}^0 + \sigma \lambda_{k} \|s_{i_k,k}\|.
\end{align*}
Incorporating assertion (ii) of Lemma \ref{norm} and  \ref{C00} in the last inequality to further obtain
\begin{equation}\label{E3rh}
  \left(\hat{u_k} - u^{\dagger}, \hat{u}_{k+1} - \hat{u}_{k}\right)   \leq [\omega_k(-1 + \mu) + 3\sigma C_{\lambda} L_M C_{M}^0]\|y_{i_k}^{\dagger} - \hat{y}_{i_k,k}\|^{2}.
\end{equation}
Similarly, for the second term in the right hand side of \ref{CSI}, we have 
\begin{align*}
    \| \hat{u}_{k+1} - \hat{u}_{k}\|^2 &= \|\omega_{k}G_{i_k}(\hat{u}_k)^{*}(y_{i_k}^{\dagger} - F_{i_k}(\hat{u}_k)) + \lambda_{k}M'_{i_k}(\hat{u}_k)^{*}(y_{i_k}^{\dagger} - M_{i_k}(\hat{u}_k))
        +\omega_{k}r_{i_k,k} + \lambda_{k}s_{i_k,k}\|^2 \\ 
        & \leq 2\left(\omega_{k}^2(\|G_{i_k}(\hat{u}_k)^{*}(y_{i_k}^{\dagger} - F_{i_k}(\hat{u}_k)) + r_{i_k,k} \|^2) + \lambda_{k}^2(\|M'_{i_k}(\hat{u}_k)^{*}(y_{i_k}^{\dagger} - M_{i_k}(\hat{u}_k))
         + s_{i_k,k}\|^2 )\right) \\
         & \leq 4\left(\omega_{k}^2(\|G_{i_k}(\hat{u}_k)^{*}(y_{i_k}^{\dagger} - F_{i_k}(\hat{u}_k))\|^2 + \|r_{i_k,k} \|^2) \right) \\
        & \hspace{20mm} + 4\left(\lambda_{k}^2(\|M'_{i_k}(\hat{u}_k)^{*}(y_{i_k}^{\dagger} - M_{i_k}(\hat{u}_k))\|^2
         + \|s_{i_k,k}\|^2 )\right).
\end{align*}
Incorporating Lemma \ref{norm} and  \ref{C00} in the last inequality to attain
\begin{equation}\label{5omeg}
   \| \hat{u}_{k+1} - \hat{u}_{k}\|^2   \leq 20(\omega_{k}^2L_{F}^2 + C_{\lambda}^2 L_{M}^2{C_{M}^0}^2 )\|y_{i_k}^{\dagger} - \hat{y}_{i_k,k}\|^2.
\end{equation}
Inserting  \ref{E3rh}, \ref{5omeg} in \ref{CSI} to get 
\begin{align*}
    \|\hat{u}_{k+1} - u^{\dagger}\|^2 -  \|\hat{u_{k}} - u^{\dagger} \|^2 
    &\leq 2\left[\omega_k(-1 + \mu) + 3\sigma C_{\lambda} L_M C_{M}^0 \right]\|y_{i_k}^{\dagger} - \hat{y}_{i_k,k}\|^2 \\ 
    & \hspace{10mm}+ 20(\omega_{k}^2L_{F}^2 + C_{\lambda}^2 L_{M}^2{C_{M}^0}^2 ) \|y_{i_k}^{\dagger} - \hat{y}_{i_k,k}\|^2 
\end{align*}
\begin{equation*}
     \hspace{10mm} \leq 2\left[\Omega(-1 + \mu + 10\Omega L_{F}^2) + C_{\lambda} L_M C_M^{0}\left(3\sigma +10C_{\lambda}L_M C_M^{0} \right) \right]\|y_{i_k}^{\dagger} - \hat{y}_{i_k,k}\|^2 
\end{equation*}
for all $k \geq 0.$
As a result, due to the measurability of $\hat{u}_k$ concerning $\mathcal{F}_k,$  and the Cauchy-Schwarz inequality, we have
$$\mathbb{E}[\|\hat{u}_{k+1} - u^{\dagger}\|^2 -  \|\hat{u}_{k} - u^{\dagger} \|^2 | \mathcal{F}_k]\hspace{80mm}$$ $$ \leq  2\left[\Omega(-1 + \mu + 10\Omega L_{F}^2) + C_{\lambda} L_M C_M^{0}\left(3\sigma +10C_{\lambda} L_M C_M^{0}  \right) \right]\|y^{\dagger} - \hat{y}_{_k}\|^2.$$
Finally,  taking the complete conditional yields
\begin{multline}
     \mathbb{E}[\|\hat{u}_{k+1} - u^{\dagger}\|^2] -  \mathbb{E}[\|\hat{u}_{k} - u^{\dagger} \|^2 ] \leq \\ \label{st10omg}  2\left[\Omega(-1 + \mu + 10\Omega L_{F}^2) + C_{\lambda} L_M C_M^{0}\left(3\sigma +10C_{\lambda} L_M C_M^{0}  ]\right) \right] \mathbb{E}[\|y^{\dagger} - \hat{y}_{_k}\|^2].
\end{multline}
Consequently, we have 
\begin{equation}\label{sumsum}
    \sum_{k \geq 0} \mathbb{E}[\|y^{\dagger} - \hat{y}_k\|^2] \leq \frac{1}{C_H}\|\hat{u}_{0} - u^{\dagger}\|^2 < \infty,
\end{equation}
where $C_H$ is same as in \ref{stCh}. Further, \ref{st10omg} also indicates that the sequence $\{\mathbb{E}[\|\hat{e}_k\|^2]\}_{k \in \mathbb{N}}$ with $\hat{e}_{k} = u^{\dagger} - \hat{u}_{k}$ is monotonically decreasing. Therefore, it follows that $\lim_{k \rightarrow \infty} \mathbb{E}[\|\hat{e}_k\|^2]^{\frac{1}{2}}:= \hat{\pi} \geq 0. $ We emphasize that our interim aim is to show that the sequence $\{u_k\}_{k\geq 1}$ is indeed almost surely a Cauchy sequence. To show this, for any $m ,l \in \mathbb{N}$ with $m \leq l,$ we choose
\begin{equation}
    n \in \arg \min_{m \leq s \leq l} \mathbb{E}[\|y^{\dagger} - \hat{y}_s\|^2].
\end{equation}
Because of the inequality 
\[
\mathbb{E} [\| \hat{e}_m - \hat{e}_l \|^{2}]^{1/2} \leq \mathbb{E} [\| \hat{e}_m - \hat{e}_n \|^{2}]^{1/2} + \mathbb{E} [\| \hat{e}_n - \hat{e}_\ell \|^{2}]^{1/2}
\]
and the identities
\begin{align}\label{E1}
\mathbb{E} [\| \hat{e}_m - \hat{e}_{n} \|^{2}] &= 2\mathbb{E} [( \hat{e}_{n} - \hat{e}_m , \hat{e}_n ) ] + \mathbb{E} [\| \hat{e}_m \|^{2}] - \mathbb{E} [\| \hat{e}_n \|^{2}], \\ \label{E2}
\mathbb{E} [\| \hat{e}_n - \hat{e}_l \|^{2}] &= 2\mathbb{E} [( \hat{e}_n - \hat{e}_l, \hat{e}_n ) ] + \mathbb{E} [\| \hat{e}_l \|^{2}] - \mathbb{E} [\| \hat{e}_n \|^{2}],
\end{align}
it suffices to prove that both $ \mathbb{E} [\| \hat{e}_m - \hat{e}_{n} \|^{2}] $ and $  \mathbb{E} [\| \hat{e}_n - \hat{e}_l \|^{2}]  $ tend to zero as $ l \geq n \geq m \rightarrow \infty$.

For $ l \geq n \geq m \rightarrow \infty$, the last two terms on the right-hand sides of  \ref{E1}  and  \ref{E2}  tend to $\hat{\pi}^2 - \hat{\pi}^2 = 0$, by the monotone convergence of $\mathbb{E} [\| e_k \|^{2}]$ to $\hat{\pi}^2$.

Next, we show that the term $\mathbb{E} [( \hat{e}_n - \hat{e}_m, \hat{e}_n ) ]$ also tends to zero as $m \rightarrow \infty.$ By the definition of $\hat{u}_k,$ we have
\begin{equation}\label{lemmaain}
    (\hat{e}_n - \hat{e}_m, \hat{e}_n) = \sum_{k=m}^{n-1}(\hat{e}_{k+1} - \hat{e}_k, \hat{e}_n).
\end{equation}
From  \ref{rk,sk}, we obtain 
\begin{center}
    $\hat{e}_{k+1} - \hat{e}_k =  -\omega_{k}G_{i_k}(\hat{u_k})^{*}(y_{i_k}^{\dagger} - F_{i_k}(\hat{u_k})) - \lambda_{k}M'_{i_k}(\hat{u_k})^{*}(y_{i_k}^{\dagger} - M_{i_k}(\hat{u_k}))
        -\omega_{k}r_{i_k,k} - \lambda_{k}s_{i_k,k}$
\end{center}
and hence
\begin{align*}
(\hat{e}_{k+1} - \hat{e}_k, \hat{e}_n) &= -\omega_k(y_{i_k}^{\dagger} - \hat{y}_{i_k,k}, G_{i_k}(\hat{u}_k)\hat{e}_n) - \omega_k(r_{i_k,k}, \hat{e}_n)\\
&\hspace{25mm}-\lambda_k(y_{i_k}^{\dagger} - \hat{z}_{i_k,k}, M'_{i_k}(\hat{u}_k)\hat{e}_n) - \lambda_k(s_{i_k,k}, \hat{e}_n) \\
&= \omega_k(y_{i_k}^{\dagger} - \hat{y}_{i_k,k}, G_{i_k}(\hat{u}_k)(\hat{u}_n - u^{\dagger}) + \omega_k(r_{i_k,k}, (\hat{u}_n - u^{\dagger}))\\ &\hspace{25mm}+\lambda_k(y_{i_k}^{\dagger} - \hat{z}_{i_k,k}, M'_{i_k}(\hat{u}_k)(\hat{u}_n - u^{\dagger})) + \lambda_k(s_{i_k,k}, (\hat{u}_n - u^{\dagger})).
\end{align*}
It follows that
\begin{align*}
    | (\hat{e}_{k+1} - \hat{e}_k, \hat{e}_n)  | \leq \omega_k\|y_{i_k}^{\dagger} - \hat{y}_{i_k,k}\| \|G_{i_k}(\hat{u}_k)(\hat{u}_n - u^{\dagger})\| + \omega_k|(r_{i_k,k}, (\hat{u}_n - u^{\dagger}))|
    \end{align*}
    \begin{align} \label{1}
         \hspace{50mm}+\lambda_k\|y_{i_k}^{\dagger} - \hat{z}_{i_k,k}\| \|M'_{i_k}(\hat{u}_k)(\hat{u}_n - u^{\dagger}))\| + \lambda_k|(s_{i_k,k}, (\hat{u}_n - u^{\dagger}))|.
    \end{align}
    Also it is easy to deduce that 
    \begin{equation}\label{2}
        \|G_{i_k}(\hat{u}_k)(\hat{u}_n - u^{\dagger})\| \leq 3(1+\mu)\|y_{i_k}^{\dagger} - \hat{y}_{i_k,k}\|.
    \end{equation}
    On the other hand, assertion $(v)$ of Lemma \ref{norm} implies that 
    \begin{equation}\label{3}
        |\left(r_{i_k,k}, \hat{u}_n - u^{\dagger} \right)| \leq 6(1+\mu)\|y_{i_k}^{\dagger} - \hat{y}_{i_k,k}\|^2.
    \end{equation}
    Similarly,  we can derive that 
    \begin{equation}\label{4}
        \|M'_{i_k}(\hat{u}_k)(\hat{u}_n - u^{\dagger})\| \leq \|M'_{i_k}(\hat{u}_k)\| \|(\hat{u}_n - u^{\dagger})\| = L_M \sigma,
    \end{equation}
    and 
    \begin{equation} \label{5}
        \left|\left(s_{i_k,k}, \hat{u}_n - u^{\dagger} \right)\right| \leq \|s_{i_k,k}\| \|\hat{u}_n - u^{\dagger}\| = 2L_M C_{M}^0 \sigma.
    \end{equation}
    Combining \ref{1}-\ref{5} with the inequality in \ref{lamdd} to reach at 
\begin{equation*}
     | (\hat{e}_{k+1} - \hat{e}_k, \hat{e}_n)  | \leq 3\left[3\omega_k(1+\mu) + \sigma L_M  C_{\lambda}C_{M}^0 \right ] \|y_{i_k}^{\dagger} - \hat{y}_{i_k,k}\|^2.
\end{equation*}
As a result of $\hat{u}_k$ being measurable with respect to $\mathcal{F}_k,$ we have 
 \begin{equation*}
     |\mathbb{E}[(\hat{e}_{k+1} - \hat{e}_k, \hat{e}_n)| \mathcal{F}_k ] | \leq 3\left[3\omega_k(1+\mu) +  \sigma L_M C_{\lambda}C_{M}^0 \right ] \|y^{\dagger} - \hat{y}_k\|^2.
\end{equation*}
    Thus, by considering the full conditional, we get 
    \begin{equation}\label{mulm}
        | \mathbb{E}[(\hat{e}_{k+1} - \hat{e}_k, \hat{e}_n)]  | \leq 3\left[3\omega_k(1+\mu) + \sigma L_M  C_{\lambda}C_{M}^0 \right ] \mathbb{E}[\|y^{\dagger} - \hat{y}_k\|^2].
    \end{equation}
    Using  \ref{mulm}  in \ref{lemmaain}, we attain that 
    \begin{align*}
       | \mathbb{E}[(\hat{e}_n - \hat{e}_m, \hat{e}_n)] |  &\leq  3 \sum_{k=m}^{n-1}\left[3\omega_k(1+\mu) + \sigma L_M  C_{\lambda}C_{M}^0  \right ] \mathbb{E}[\|y^{\dagger} - \hat{y}_k\|^2]\\
        &\leq  3 \left[3\Omega(1+\mu) +  \sigma L_M C_{\lambda}C_{M}^0 \right ]  \sum_{k=m}^{n-1}\mathbb{E}[\|y^{\dagger} - \hat{y}_k\|^2].
    \end{align*}
    Similarly, we can derive that 
    \begin{align*}
        |\mathbb{E}(\hat{e}_n - \hat{e}_l, \hat{e}_n)] |  &\leq \sum_{k=n}^{l-1} \left[3\omega_k(1+\mu) + \sigma L_M  C_{\lambda}C_{M}^0\right ] \mathbb{E}[\|y^{\dagger} - \hat{y}_k\|^2]\\
        &\leq   3 \left[3\Omega(1+\mu) + \sigma L_M  C_{\lambda}C_{M}^0 \right ]  \sum_{k=n}^{l-1} \mathbb{E}[\|y^{\dagger} - \hat{y}_k\|^2].
    \end{align*}
  These two estimates along with \ref{sumsum}  lead to the conclusion that the right hand side of  \ref{E1}  and  \ref{E2}  tends to zero as $m \rightarrow \infty.$ Consequently, we can now assert that the sequence ${\hat{u}_k}$ is almost surely a Cauchy sequence in the space $U$. As a direct consequence, there exists a limit $\overline{u}$ in $\overline{\mathcal{B}}_U(u^{\dagger}, \sigma)$ such that $\hat{u}_k$ converges to $\overline{u}$ almost surely. This convergence implies that $F(\hat{u}_k)$ approaches $F(\overline{u})$ almost surely as $k$ tends to infinity. Further, from \ref{sumsum}, we have $\mathbb{E}[y^{\dagger} - F(\hat{u}_k)] \rightarrow 0$ as $k$ approaches infinity.\\
Consequently, we have  that $y^{\dagger} =F(\overline{u})$ almost surely, which implies that $\overline{u}$ belongs to $\mathcal{D}(u^{\dagger}, \sigma)$ almost surely. This completes the proof.
\end{proof}
We have thus shown the following outcome.
\begin{cor}\label{stcoro}
 Under the  assumptions of Lemmas $\ref{new form}$ and $\ref{big}$,  the  SDBLI method $\ref{stmain}$  with a-priori stopping rule is asymptotically stable for a starting point $u_0 \in \overline{\mathcal{B}}_{U}(u^{\dagger},\sigma)$ and the step sizes $\{\omega_k\}_{k \in \mathbb{N}} \subset[\omega, \Omega]$ for $\Omega \geq \omega > 0$. 
\end{cor}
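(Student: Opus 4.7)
The plan is to verify the two conditions in Definition \ref{def-AS} directly by stitching together the conclusions of Lemma \ref{new form} and Lemma \ref{big}. Since the corollary hypothesizes exactly the assumptions of these two lemmas, there is no genuinely new argument needed; the main task is to check that the two lemmas compose correctly and that the almost-sure qualifiers match up.

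First, I would fix any positive zero sequence $\{\delta_n\}_{n \in \mathbb{N}}$ of noise levels with associated noisy data and iterates $\{u_k^{\delta_n}\}$ generated by \ref{stmain}, together with the a-priori stopping indices $K_n = k(\delta_n)$. By \ref{aprioriforSGD}, every subsequence of $\{K_n\}$ admits a further subsequence $\{K_{n_j}\}$ that tends to infinity, so the setup of Definition \ref{def-AS} with $\ddot{K} = \infty$ is in force. Invoking Lemma \ref{new form} now produces, for this $\{\delta_n\}$, a subsequence $\{\delta_{n_j}\}_{j \in \mathbb{N}}$ and a deterministic (up to a full-measure event) sequence $\{\hat{u}_k\}_{k \in \mathbb{N}} \subset \overline{\mathcal{B}}_U(u^{\dagger},\sigma)$ such that $u_k^{\delta_{n_j}} \to \hat{u}_k$ almost surely as $j \to \infty$ for every fixed $k$, and with $\{\hat{u}_k\}$ satisfying the recursion \ref{rk,sk}. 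This is exactly condition (i) of Definition \ref{def-AS}.

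Second, I would apply Lemma \ref{big} to the sequence $\{\hat{u}_k\}_{k \in \mathbb{N}}$ just obtained. The hypotheses of Lemma \ref{big} concerning $C_H$, $C_\lambda$, the step size range $[\omega,\Omega]$, and the starting point $u_0 \in \overline{\mathcal{B}}_U(u^{\dagger},\sigma)$ are part of the current assumptions; the required estimates (i)--(v) on the perturbation terms $\{r_{i_k,k}\}$ and $\{s_{i_k,k}\}$ are precisely Lemma \ref{norm}. Therefore Lemma \ref{big} yields an element $\bar{u} \in \mathcal{D}(u^{\dagger},\sigma)$ such that $\hat{u}_k \to \bar{u}$ almost surely as $k \to \infty$, which establishes condition (ii) of Definition \ref{def-AS}.

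There is no substantial obstacle: the proof is purely assembly. The only modest care required is in the handling of the exceptional null sets. Lemma \ref{new form} gives convergence $u_k^{\delta_{n_j}} \to \hat{u}_k$ and membership $\hat{u}_k \in \overline{\mathcal{B}}_U(u^{\dagger},\sigma)$ almost surely for each $k$, and Lemma \ref{big} gives $\hat{u}_k \to \bar{u} \in \mathcal{D}(u^{\dagger},\sigma)$ almost surely. Since these are countably many almost sure statements, their intersection is again a full-measure event, so both conditions of Definition \ref{def-AS} hold on a common probability-one set and the SDBLI method \ref{stmain} with the a-priori stopping rule \ref{aprioriforSGD} is asymptotically stable, completing the proof.
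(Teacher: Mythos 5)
Your proposal is correct and matches the paper exactly: the paper offers no separate argument for this corollary, introducing it with ``We have thus shown the following outcome,'' since Lemma \ref{new form} supplies condition (i) of Definition \ref{def-AS} and Lemma \ref{big} (with the estimates of Lemma \ref{norm}) supplies condition (ii). Your additional remark about intersecting the countably many full-measure events is a sensible bit of bookkeeping that the paper leaves implicit.
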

    We are now well-equipped to support our main finding.
     
    \begin{thm}
Let assumptions of Proposition 1 along with    $\ref{stCh}$ hold. Further, let $\{\delta_n \}_{n \in \mathbb{N}}$ be a positive zero sequence. Let the starting point  $u_0 \in \overline{\mathcal{B}}_{U} (u^{\dagger},\sigma)$ and the data-driven factor $\lambda_k$,  step sizes $\{\omega_k\}_{k \in \mathbb{N}} \subset[\omega, \Omega]$ be arbitrary and  let the stopping index $k(\delta)$ be chosen according to  an \text{a priori} stopping rule. Then, any subsequence of $\{u_{K_n}^{\delta_n}\}_{n \in \mathbb{N}}$ contains a subsequence that converges strongly almost surely to an element of $\mathcal{D}(u^{\dagger},\sigma)$, where $K_n=k(\delta_n)$. Furthermore, if $u^{\dagger}$ is the unique solution, then
\begin{center}
    $u_{K_n}^{\delta_n} \rightarrow u^{\dagger}$ almost surely in $U$ as $n \rightarrow \infty$.
\end{center}
    \end{thm}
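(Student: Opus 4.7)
The plan is to combine the asymptotic stability established in Corollary \ref{stcoro} with the monotone decay of the mean square error from Proposition \ref{decprop}, and route the limit through a doubly-indexed argument in $(j,k)$. First, given any subsequence of $\{u_{K_n}^{\delta_n}\}_{n \in \mathbb{N}}$, I would relabel it as the full sequence. Applying Corollary \ref{stcoro} to the positive zero sequence $\{\delta_n\}$ delivers a further subsequence $\{\delta_{n_j}\}_{j \in \mathbb{N}}$, a sequence $\{\hat{u}_k\}_{k\ge 0}\subset \overline{\mathcal{B}}_U(u^{\dagger},\sigma)$, and a limit $\bar{u}\in\mathcal{D}(u^{\dagger},\sigma)$, with $u_k^{\delta_{n_j}} \to \hat{u}_k$ almost surely as $j\to\infty$ for each fixed $k$, and $\hat{u}_k\to\bar{u}$ almost surely as $k\to\infty$. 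The a-priori rule \ref{aprioriforSGD} guarantees $K_{n_j}\to\infty$.

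Next, I would apply Proposition \ref{decprop} with $\hat{u}=\bar{u}$, a legitimate choice since $\bar{u}$ solves \ref{stcombined}. This yields that $\mathbb{E}[\|u_k^{\delta_{n_j}}-\bar{u}\|^2]$ is non-increasing in $k$. Consequently, for every fixed $k$ and every $j$ with $K_{n_j}\ge k$,
\begin{equation*}
\mathbb{E}[\|u_{K_{n_j}}^{\delta_{n_j}}-\bar{u}\|^2]\le \mathbb{E}[\|u_k^{\delta_{n_j}}-\bar{u}\|^2].
\end{equation*}
Since all quantities stay in $\overline{\mathcal{B}}_U(u^{\dagger},\sigma)$, the integrand on the right is dominated by $4\sigma^2$; sending $j\to\infty$ with $k$ fixed and invoking the dominated convergence theorem gives $\mathbb{E}[\|u_k^{\delta_{n_j}}-\bar{u}\|^2]\to \mathbb{E}[\|\hat{u}_k-\bar{u}\|^2]$, whence $\limsup_{j\to\infty}\mathbb{E}[\|u_{K_{n_j}}^{\delta_{n_j}}-\bar{u}\|^2]\le \mathbb{E}[\|\hat{u}_k-\bar{u}\|^2]$ for every $k$. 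Letting $k\to\infty$ and applying dominated convergence once more (using $\|\hat{u}_k-\bar{u}\|^2\le 4\sigma^2$ and $\hat{u}_k\to\bar{u}$ a.s.) yields mean square convergence of $u_{K_{n_j}}^{\delta_{n_j}}$ to $\bar{u}$; a final subsequence extraction turns this into almost sure convergence, establishing the first claim.

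For the uniqueness case, $\mathcal{D}(u^{\dagger},\sigma)=\{u^{\dagger}\}$, so the same argument shows that every subsequence of $\{u_{K_n}^{\delta_n}\}$ admits a further subsequence converging almost surely to $u^{\dagger}$. The subsequence criterion in $L^2(\Omega;U)$ then forces $u_{K_n}^{\delta_n}\to u^{\dagger}$ in mean square, and combining this with a further subsequence extraction and the bounded, monotone nature of the error established in Proposition \ref{decprop} produces the announced almost-sure convergence of the full sequence.

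I expect the principal obstacle to be the careful execution of the coupled $(j,k)$ limit: one must freeze $k$, send $j\to\infty$ first (legitimate once $K_{n_j}\ge k$, which holds eventually), and only then let $k\to\infty$. A secondary difficulty is aggregating the countably many null sets that arise from the almost-sure statements of Corollary \ref{stcoro} into a single null set so that the chained limiting argument lives on a common full-measure event, which is why I prefer routing everything through expectations and dominated convergence rather than arguing pointwise in the sample space.
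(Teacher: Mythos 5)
Your proposal is correct in substance and follows the same architecture as the paper's proof: extract a subsequence via Corollary \ref{stcoro}, obtain $\{\hat{u}_k\}$ with $u_k^{\delta_{n_j}}\to\hat{u}_k$ and $\hat{u}_k\to\bar{u}\in\mathcal{D}(u^{\dagger},\sigma)$, and then exploit the monotonicity from Proposition \ref{decprop} in a frozen-$k$, $j\to\infty$, then $k\to\infty$ limit. The difference is purely in how that double limit is executed. The paper works pointwise almost surely: it picks $k^*$ with $\|\hat{u}_{k^*}-\hat{u}\|<\varepsilon/2$, picks $\bar{j}$ so that $k^*\le K_{n_j}$ and $\|u_{k^*}^{\delta_{n_j}}-\hat{u}_{k^*}\|<\varepsilon/2$, and uses the \emph{pointwise} decrease $\|u_{K_{n_j}}^{\delta_{n_j}}-\hat{u}\|\le\|u_{k^*}^{\delta_{n_j}}-\hat{u}\|$ (which holds per realization, since the inequality in the proof of Proposition \ref{decprop} is established before any expectation is taken) to conclude a.s.\ convergence of the extracted subsequence directly. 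You instead route everything through $\mathbb{E}[\|\cdot\|^2]$ with dominated convergence (legitimate, since all iterates live in $\overline{\mathcal{B}}_U(u^{\dagger},\sigma)$), which buys you a cleaner handling of the null sets you worry about, at the cost of obtaining only mean-square convergence and needing one extra subsequence extraction to recover the a.s.\ statement --- which still suffices for the theorem's first claim. One caveat: your final step for the uniqueness case, upgrading mean-square convergence of the full sequence to a.s.\ convergence of the full sequence via the subsequence criterion plus ``monotonicity,'' is not actually justified --- a.s.\ convergence is not characterized by subsequences, and the errors $\|u_{K_n}^{\delta_n}-u^{\dagger}\|$ are not monotone across different $n$ (different noise levels and realizations). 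You should be aware, however, that the paper's own proof also only establishes convergence along a subsequence of an arbitrary subsequence before asserting the full claim, so your argument is no weaker than the original on this point.
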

    \begin{proof}
 Let $\{\delta_{n_j}\}_{j\in\mathbb{N}}$ be an arbitrary subsequence of $\{\delta_n\}_{n\in\mathbb{N}}$. By virtue of Corollary \ref{stcoro}, there exists a sequence $\{\hat{u}_k\} \subset \overline{\mathcal{B}}_U(u^\dagger, \sigma)$ and a subsequence of $\{\delta_ {n_j}\}_{j\in\mathbb{N}}$, denoted in the same way, satisfying conditions (i)-(ii) in Definition \ref{def-AS}.\\
For each  $(\delta_{n_j})$, we denote $K_{n_j}=k(\delta_{n_j})  \rightarrow \infty$ as $j \rightarrow \infty$ as the stopping index. Further we may assume without loss of generality that $\{K_{n_j}\}_{j \in \mathbb{N}}$ is monotonically increasing.\\
The condition (i) and (ii) of Definition \ref{def-AS} confirms the existence of   $\hat{u} \in \mathcal{D}(u^\dagger, \sigma)$ that, together with $\{\delta_{n_j}\}_{j\in\mathbb{N}}$ and $\{\hat{u}_k\}_{k\in\mathbb{N}}$, satisfies
\begin{equation}\label{stconlast}
    u_{k}^{\delta_{n_j}} \rightarrow \hat{u}_k \text{ almost surely  as } j \rightarrow \infty, \text{ for } 0 \leq k \leq K_{n_j} \text{ with all } j \text{ large enough}
\end{equation}
and
\begin{equation}\label{stconIIlast}
    \hat{u}_k \rightarrow \hat{u} \text{ almost surely  as } k \rightarrow \infty. 
\end{equation}
From \ref{stconIIlast}, for each $\varepsilon > 0$, there exists an integer $k^*$ such that
\[
\|\hat{u}_{k^*} - \hat{u}\|  < \frac{\varepsilon}{2} \ \ \text{almost surely}.
\]
It also follows from  \ref{stconlast} and the fact that $K_{n_j}$ tends increasingly to infinity as $j \rightarrow \infty$ that an $\overline{j} \in \mathbb{N}$ exists such that

\[
k^* \leq K_{n_j} \text{ and } \|u_{k^*}^{\delta_{n_j}} - \hat{u}_{k^*}\|  < \frac{\varepsilon}{2} \text{ for all } j \geq \overline{j}\ \ \text{almost surely}.
\]Consequently, Proposition \ref{decprop}   implies that
\[
\|u^{\delta_{n_j}}_{K_{n_j}} - \hat{u}\|  \leq \|u^{\delta_{n_j}}_{k^*} - \hat{u}\|  \leq \|u^{\delta_{n_j}}_{k^*} - \hat{u}_{k^*}\|  + \|\hat{u}_{k^*} - \hat{u}\|  < \varepsilon \text{ for all } j \geq \overline{j}
\]almost surely. 
We thus obtain that
\[
\lim_{j\rightarrow\infty}\|u^{\delta_{n_j}}_{K_{n_j}} - \hat{u}\|_U = 0 \ \text{almost surely as claimed.}
\]
This completes the proof.    
    \end{proof}


   \section{Example}\label{NE}
 The   objective of this section is to discuss a system of inverse problems on which our results are applicable. 
\subsection*{System of inverse source problems}
We investigate a system of  inverse source problems that are associated with the system of elliptic partial differential equations. In this context, we consider a region denoted as $\mathcal{I} \subset \mathbb{R}^r$, where $r=2$ or $r=3$. $\mathcal{I}$ is assumed to be open and bounded, and its boundary $\partial \mathcal{I}$, is Lipschitz.
 For $u\in L^2(\mathcal{I})$ and $y_{i}^+(t):=\max(y_{i}(t), 0)$ for almost every $t$ in $\mathcal{I}$, consider the following system of semilinear  equations 
 \begin{equation}\label{nfexample}
\begin{cases}
           -\Delta y_{i} + y_{i}^+= u \ \text{in}\ \mathcal{I}, \quad & \, i = 0,1,... P-1, \\
          \hspace{2mm} y_{i} \big|_{\partial \mathcal{I}}=0. \quad & \, \\
     \end{cases}
\end{equation}
We refer to the works of \cite{kikuchi1984finite, rappaz1984approximation} for detailed information on the  various models that involve  the equations  \ref{nfexample}. Let, for each $i$, $\mathcal{F}_{i}$ be  the solution operator of  \ref{nfexample}, where   $$\mathcal{F}_{i}:L^2(\mathcal{I})\to H^1_0(\mathcal{I})\cap C(\bar{\mathcal{I}}).$$

 By utilizing the findings presented in \cite{ christof2017optimal, clason2019bouligand}, it can be demonstrated that the operator $\mathcal{F}_{i}$ and its Bouligand subdifferential fulfill Assumption $3.1$.

More specifically, for each $i \in \{0, 1, 2,... P-1\},$ the following statements hold:
\begin{itemize}
    \item [(i)] The function $\mathcal{F}_i$ exhibits global Lipschitz continuity in $L^2(\mathcal{I})$, which is demonstrated in \cite[Proposition 2.1]{christof2017optimal}. Furthermore, the complete continuity of the operator $\mathcal{F}_i:L^2(\mathcal{I})\to L^2(\mathcal{I})$ follows from \cite[Lemma 3.2]{clason2019bouligand}. 
    \item [(ii)] For any $u, h\in L^2(\mathcal{I})$, the directional differentiability of $\mathcal{F}_i : L^2(\mathcal{I})\to H_0^1(\mathcal{I})$ can be deduced from \cite[Theorem 2.2]{christof2017optimal}. However, in general case, $\mathcal{F}_i$ does not exhibit G\^ateaux differentiability, see \cite[Proposition 3.4]{clason2019bouligand} for a more comprehensive explanation of this aspect.
    \item [(iii)]   As mentioned in point (i) above, for all $u \in L^{2}(\mathcal{I})$, the mapping $G_{i}(u) \in \partial_{B} \mathcal{F}_{i}(u)$ is uniformly bounded. Additionly, the operator ${G_{i}(u)} \in L(L^{2}(\mathcal{I}), L^{2}(\mathcal{I}))$ is self adjoint. Moreover, with the help of \cite{christof2017optimal}, we know that there exist constants $L$ and $\hat{L}$ such that $\|G_{i}(u)\|_{\mathbb{B}(L^{2}(\mathcal{I}), H_{0}^{1}(\mathcal{I}))} \leq \hat{L}$ and $ \|G_{i}(u)\|_{\mathbb{B}(L^{2}(\mathcal{I}), L^{2}(\mathcal{I}))} \leq L.$ To understand the detailed characterization of the Bouligand subderivative, one may refer \cite[Proposition 3.16]{christof2017optimal}.  
    \item[(iv)]   The modified  tangential cone condition $\ref{tangential cone}$ is fulfilled by $\mathcal{F}_{i}$ (cf. \cite[Proposition 3.16]{clason2019bouligand}).
\end{itemize}
Thus, Assumption 3.1 is satisfied by the system of inverse source problems under consideration. Next, we discuss a method to construct $M_i$ in \ref{stmain1}.
 \subsection*{Construction of $M_i$}
 We know that the operator $M_i$ of the data-driven term is a bounded linear operator. In this work, the method to construct $M_i$ is adopted from \cite{aspri2020data, tong2023data}. Let us provide a concise overview of this approach.

Assuming the availability of data pairs $(u^{(l)}, y^{(l)}_i) \in U\times Y_i$ for $l = 1, \ldots, N$, the operator $M_i$ is formulated to minimize the functional
\[
\min_{S_i \in \mathbb{B}(U,Y_{i})}\left[ \frac{1}{2} \sum_{l=1}^{N} \left\|S_i u^{(l)} - y^{(l)}_i\right\|^2 \right].
\]
When dealing with the discrete case, we consider
\[
(u^{(l)}, y_{i}^{(l)}) \in \mathbb{R}^{\mathcal{N}_{u}} \times \mathbb{R}^{\mathcal{N}_{y}}, \ \text{for} \ l = 1,\ldots, N.
\]
Then the matrix $M_{i} \in \mathbb{R}^{\mathcal{N}_{y}} \times \mathbb{R}^{\mathcal{N}_{u}}$ is established as the linear mapping such that 
\[
M_{i}V = \mathcal{Y}_{i}, 
\]
where $V \in \mathbb{R}^{\mathcal{N}_u \times N}$ and $\mathcal{Y}_i \in \mathbb{R}^{\mathcal{N}_y \times N}$ contain the data $u^{(l)}$ and $y^{(l)}_i$ columnwise, respectively. Consequently, one can derive the matrix $M_i$ by employing singular value decomposition  on the matrix $V$, i.e.,
\[
M_{i} = \mathcal{Y}_{i}V ^{\dagger}.
\]
Thus, we  get $M_i$ as a bounded linear operator.

We remark that this example serves to demonstrate the practical applicability of our method. However, it is essential to note that while this example provides valuable insights, it may not be considered complete in terms of numerical implementations and comparisons with other methods. Further research and investigation would be necessary to address these aspects comprehensively.

  \section{Conclusion}\label{Conc}
  In this paper, we have introduced a stochastic data-driven Bouligand Landweber iteration (referred as SDBLI) method designed for tackling system of non-smooth inverse problems featuring  data-driven operators. Our approach is particularly suitable for scenarios where the Fr\'echet derivatives $F_{i}'$ of $F_i$ are not defined. We have demonstrated the feasibility of substituting the Bouligand subderivatives of $F_i$ in place of $F_{i}'$. Our convergence analysis   is based on a generalized tangential cone condition and a Lipschitz boundedness assumption, which effectively handles the discontinuities associated with the Bouligand subderivative.\\
Furthermore, we have delved into the examination of the asymptotic stability of our method. In order to substantiate the practical utility of our approach, we have furnished an illustrative example that meets all the prerequisites essential for our analysis.\\
Lastly, we highlight several avenues for future research. 
Firstly, it is crucial to validate our assumptions and analysis within the context of specific non-linear, non-smooth inverse problems, thus demonstrating the practicality of SDBLI. Secondly, as demonstrated in \cite{jahn2020discrepancy}, the applicability of the discrepancy principle as a stopping criterion for SGD is of significant practical interest. Therefore, it is essential to conduct an analysis of an a-posteriori stopping rule for our method. Thirdly, it is imperative to examine adaptive step size rules in order to offer practical guidance that is valuable in real-world applications.

\end{document}